\newcommand{\floor}[1] {\left\lfloor #1 \right\rfloor}
\theoremstyle{plain} \numberwithin{equation}{section}
\newtheorem{theorem}{Theorem}[section]
\newtheorem{corollary}[theorem]{Corollary}
\newtheorem{lemma}[theorem]{Lemma}
\newtheorem{proposition}[theorem]{Proposition}
\theoremstyle{definition}
\newtheorem{definition}[theorem]{Definition}
\newtheorem{remark}[theorem]{Remark}
\newcommand{\x}{\times}
\newcommand{\s}{$\mathfrak{s}$}
\def \f-{f^{-1}}
\def \fp-{f^{-1}_\partial}
\def\Z{\mathbb{Z}}
\def\F{\mathbb{F}}
\def\II {\mathcal{I}}
\def\JJ {\mathcal{J}}
\def\l {\ell}
\def\x {\mathbf{x}}
\DeclareMathOperator{\gr}{gr}
\def\CF {\operatorname{CF}}
\def\HF {\operatorname{HF}}
\def\CFp {\operatorname{CF}^+}
\def\CFm {\operatorname{CF}^-}
\def\CFi {\operatorname{CF}^\infty}
\def\CFa {\operatorname{\widehat{CF}}}
\def\HFa {\operatorname{\widehat{HF}}}
\def\CFK {\operatorname{CFK}}
\def\CFKm {\operatorname{CFK}^-}
\def\HFKm {\operatorname{HFK}^-}
\def\CFKi {\operatorname{CFK}^\infty}
\DeclareMathOperator{\Spin}{Spin}
\DeclareMathOperator{\tb}{tb}
\DeclareMathOperator{\rot}{rot}
\def\spincs {\mathfrak{s}}
\def\X {\mathcal{X}}
\newcommand{\rotq}{rot_{\mathbb{Q}}}
\newcommand{\tbq}{tb_{\mathbb{Q}}}
\DeclareMathOperator{\PD}{PD}
\title{Surgeries on knots and tight contact structures}
\author{Zhenkun Li}
\address{Academy of Mathematics and Systems Science, Chinese Academy of Sciences, Beijing, China}
\email{zhenkun@amss.ac.cn}
\author{Shunyu Wan}
\address{Department of Mathematics, Georgia institute of technology, GA, USA}
\email{swan48@gatech.edu}
\author{Hugo Zhou}
\address{Department of Mathematics, University of Michigan, Ann Arbor, MI, USA}
\email{hugozhou@umich.edu}
\begin{document}
\maketitle
\begin{abstract}
For any knot $K$ in $S^3$ and any positive rational $r$, we show that smooth $(-r)$-surgery on $K$ always admits a tight contact structure. More specifically, the tightness is detected by the non-vanishing Heegaard Floer contact invariant.



\end{abstract}
\section{Introduction}

The existence and nonexistence of tight contact structures on the $3$-manifold are interesting and important topics studied over the past thirty years. Etnyre-Honda \cite{EtnyreHondanonexistenceoftight} found the first example of a $3$-manifold that does not admit tight contact structure, and later Lisca-Stipsicz \cite{LiscaStipsicztightSeifert} extended their result and showed that a Seifert fiber space admits a tight contact structure if and only if it is not the smooth $(2n-1)$-surgery along the $T_{2,2n+1}$ torus knot for any positive integer $n$.

Surprisingly, since then no other example of a $3$-manifold without tight contact structure has been found. In particular, we do not know of a hyperbolic $3$-manifold that does not admit a tight contact structure, and it has become a central problem to determine whether such an example exists. 
%
%
Since the only examples of irreducible $3$-manifolds not admitting a tight contact structure are constructed by surgery on a knot in $S^3$, it is interesting to study if all such manifolds, except those mentioned above, admit a tight contact structure. Towards this goal, we are able to prove the following theorem.

\begin{theorem} \label{thm: negative surgery on knot is tight}
     For any knot $K\in S^3$ and $r\in \mathbb{Q}_{> 0}$, $S^3_{-r}(K)$ admits a tight contact structure with non-vanishing Heegaard Floer contact invariant.
\end{theorem}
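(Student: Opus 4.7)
My plan is to realize $(S^3_{-r}(K), \xi)$ as the convex boundary of a Weinstein/Stein 4-manifold, so that the non-vanishing of the Heegaard Floer contact invariant $c(\xi)$ follows from the Ozsv\'ath--Szab\'o theorem that Stein fillable contact structures have non-vanishing contact class (and in particular are tight). Concretely, I will exhibit $S^3_{-r}(K)$ as the result of contact $(-1)$-surgery (equivalently, Legendrian surgery) on every component of a suitable Legendrian link $\mathbb{L} \subset (S^3, \xi_{std})$, so that the filling is constructed by attaching Weinstein $2$-handles to $D^4$ along $\mathbb{L}$.

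To build $\mathbb{L}$, first fix any Legendrian representative of $K$ and stabilize it sufficiently to obtain $L_0$ with $t := \tb(L_0)$ satisfying $t < 1 - r$; this guarantees that the surplus $\delta := -r - (t - 1)$ is a positive rational. Decompose $\delta = s_1 + \cdots + s_k$ as a sum with each $s_i \in (0, 1)$ rational (for example, $s_i = \delta / k$ with $k = \lfloor \delta \rfloor + 1$). Each $s_i$ admits a unique finite Hirzebruch--Jung (negative) continued fraction expansion
\[
    s_i = -\cfrac{1}{a^{(i)}_1 - \cfrac{1}{a^{(i)}_2 - \cfrac{1}{\ddots - \cfrac{1}{a^{(i)}_{n_i}}}}}
\]
with integers $a^{(i)}_j \leq -2$, produced by iterating $y \mapsto \lfloor y \rfloor$ on $y = -1/s_i$. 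For each $i$, attach a chain of Legendrian unknots to $L_0$ in a disjoint Darboux ball: the outermost unknot is a Legendrian meridian of $L_0$, and each subsequent unknot is a Legendrian meridian of the previous, with the $j$-th unknot stabilized so that its Thurston--Bennequin number equals $a^{(i)}_j + 1 \leq -1$. Let $\mathbb{L}$ denote the union of $L_0$ with all these meridional chains.

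Performing Legendrian surgery on every component of $\mathbb{L}$ yields a Stein fillable contact 3-manifold $(Y, \xi)$. By iterated slam-dunk moves applied to the smooth surgery description (framing $t - 1$ on $L_0$ and framings $a^{(i)}_j$ on each chain unknot), the $i$-th chain contributes exactly $s_i$ to the framing of $L_0$, so the total framing becomes $(t - 1) + \sum_i s_i = -r$, i.e. $Y \cong S^3_{-r}(K)$. By Ozsv\'ath--Szab\'o, $c(\xi) \neq 0$, and in particular $\xi$ is tight.

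The main delicate step is checking that the combinatorial and topological ingredients assemble uniformly: the existence of the Hirzebruch--Jung expansion for each $s_i \in (0,1)$ with all coefficients $\leq -2$ is a routine Euclidean-algorithm check; the simultaneous realization of the Legendrian meridional chains with the prescribed $\tb$ values in $(S^3, \xi_{std})$ follows from the Legendrian neighborhood theorem, since each chain lives in its own Darboux ball and Legendrian unknots with any $\tb \leq -1$ arise from stabilizing the standard $\tb = -1$ representative; and one must verify that the slam-dunk rule correctly converts the integer-framed surgery link into rational $(-r)$-surgery on $K$. The key conceptual point is that by taking $t$ sufficiently negative via stabilization of $L_0$, we can always arrange a positive surplus $\delta$ expressible as a sum of chain contributions, so the construction applies uniformly to every knot $K$ and every positive rational $r$.
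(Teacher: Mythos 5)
Your construction fails at the step where you attach several meridional chains to $L_0$ and assert that their contributions to the surgery coefficient add. The slam--dunk deletes a rationally framed meridian of a component whose own coefficient is an \emph{integer} (only then is the meridian isotopic to the core of the filling solid torus); once one chain has been absorbed, $L_0$ carries a coefficient with denominator $\geq 2$, its meridian is no longer isotopic to the core, and the remaining chains cannot be dunked. The additive formula is simply false. A minimal counterexample: the unknot $U$ with framing $0$ together with two meridians each framed $-2$ is the linear chain with weights $(-2,0,-2)$, hence yields $[-2,0,-2]=-4$ surgery on an unknot, i.e.\ a lens space with $|H_1|=4$; this matches the linking matrix determinant
\[
\det\begin{pmatrix} 0 & 1 & 1\\ 1 & -2 & 0 \\ 1 & 0 & -2\end{pmatrix}=4,
\]
whereas your formula predicts coefficient $0-\tfrac{1}{-2}-\tfrac{1}{-2}=1$ and hence $|H_1|=1$. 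In general, surgery on $K$ together with two or more meridional chains is not a Dehn surgery on $K$ at all. A global sanity check that something had to fail: if the contributions added, the identical construction would Stein-fill $S^3_s(K)$ for \emph{every} rational $s$ — for instance, starting from the $\tb=1$ right-handed trefoil with framing $0$ and two $(-2)$-framed Legendrian meridians you would obtain a Stein filling of $S^3_1(T_{2,3})=-\Sigma(2,3,5)$, which admits no tight contact structure whatsoever by Etnyre--Honda.

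What survives is the single-chain case: one meridional chain with all framings $\leq -2$ raises the coefficient by an amount in $(0,1)$, so you obtain Stein fillable (hence $c\neq 0$) structures on $S^3_s(K)$ exactly for $s<\overline{\tb}(K)$, the maximal Thurston--Bennequin invariant of $K$ in $(S^3,\xi_{std})$. That is the classical easy range, essentially equivalent to negative contact surgery on a maximal-$\tb$ representative. The entire content of the theorem is the remaining window $s\in[\overline{\tb}(K),0)$, which is nonempty for every knot with $\overline{\tb}(K)<0$ (the figure-eight knot, all negative torus knots, and many others, since $\overline{\tb}(K)\leq 2\tau(K)-1$). In that window one cannot expect Stein or even weak fillability in general, only non-vanishing of the contact invariant; this is precisely why the paper abandons $\xi_{std}$ and instead produces admissible Legendrian representatives of $K$ with $\tb=0$ in a possibly overtwisted contact structure on $S^3$ (Theorem \ref{thm: 2g-1 non-loose knot}, via sutured Floer homology and contact gluing), controls the contact invariant of their Legendrian surgeries through the LOSS invariant and the injectivity statement of Proposition \ref{Prop: inject at top grading}, and only afterwards applies the Ding--Geiges--Stipsicz algorithm to reach all rational coefficients. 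You would need an input of that kind — some tightness certificate not coming from a filling — to close the gap.
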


Moreover, since $0$ surgery on a knot always admits a taut foliation \cite{GabaiFoliaationsIII}, hence a tight contact structure \cite{EliashbergThurstonConfoliations}, we also have the following corollary. 

\begin{corollary}
    For any knot $K\in S^3$ and $r\in \mathbb{Z}_{\geq 0}$, $S^3_{-r}(K)$ admits at least one tight contact structure.
\end{corollary}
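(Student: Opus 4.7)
The plan is to reduce the corollary to two already-available inputs: Theorem~\ref{thm: negative surgery on knot is tight} for the positive-integer slopes, and the Gabai / Eliashberg--Thurston route flagged in the paragraph preceding the statement for the zero slope. Concretely, I would split the argument on whether $r \in \mathbb{Z}_{\geq 0}$ is positive or zero, and verify each case independently; there is no interaction between the two regimes.

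For $r \in \mathbb{Z}_{>0}$, I would simply note that every positive integer is an element of $\mathbb{Q}_{>0}$ and then apply Theorem~\ref{thm: negative surgery on knot is tight} as a black box. This immediately produces a tight contact structure on $S^3_{-r}(K)$ (in fact one with non-vanishing Heegaard Floer contact invariant, which is stronger than required).

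For $r = 0$, Theorem~\ref{thm: negative surgery on knot is tight} does not apply, and I would further split on $K$. If $K$ is the unknot, then $S^3_0(K) \cong S^1 \times S^2$, and this manifold carries a well-known tight contact structure (for example, the standard one coming from the product structure), so nothing more is needed. If $K$ is a non-trivial knot in $S^3$, I would invoke Gabai's theorem from \cite{GabaiFoliaationsIII} to get a taut, coorientable $C^\infty$ foliation on $S^3_0(K)$, and then apply the Eliashberg--Thurston perturbation theorem from \cite{EliashbergThurstonConfoliations} to deform this foliation into a positive tight contact structure. Combining the two subcases covers $r=0$, and together with the $r > 0$ step this yields the corollary.

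The main point to be careful about is not really an obstacle but a bookkeeping step: the unknot must be treated separately in the $r=0$ argument, since Gabai's foliation result is stated for non-trivial knots; once that edge case is singled out and handled by the explicit diffeomorphism $S^3_0(U) \cong S^1 \times S^2$, the corollary follows as a direct assembly of Theorem~\ref{thm: negative surgery on knot is tight} with two classical inputs.
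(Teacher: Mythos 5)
Your argument is correct and is essentially the paper's own: the $r>0$ case is immediate from Theorem~\ref{thm: negative surgery on knot is tight}, and the $r=0$ case uses Gabai's taut foliation on $S^3_0(K)$ perturbed to a tight contact structure via Eliashberg--Thurston, exactly as in the sentence preceding the corollary. Your separate treatment of the unknot (where $S^3_0(U)\cong S^1\times S^2$ carries the standard tight structure) is a sensible refinement the paper leaves implicit, but it does not change the route.
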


\begin{remark}
    In particular, the above corollary says that every hyperbolic manifold that is a non-positive surgery along the knot in $S^3$ admits a tight contact structure.
\end{remark}

To better understand how the proof of Theorem \ref{thm: negative surgery on knot is tight} goes, let us first consider a related problem: given a Legendrian knot $L$, how does $L$ (or its complement) affect the contact structure obtained by the Legendrian surgery on $L$. More precisely, we can ask the following two questions: 

\begin{enumerate}
    \item [Q1.] If $L_1$ and $L_2$ are two non-Legendrian isotopic Legendrian knots in a contact $3$-manifold, when are the contact structures obtained by the Legendrian surgery on $L_1$ and $L_2$ different?
\item [Q2.] If $L$ is a non-loose (i.e., the complement is tight) Legendrian knot in some overtwisted contact $3$-manifold, when is the Legendrian surgery on $L$ tight?
\end{enumerate}

The question Q1 has been studied, for example, in \cite{BourgeoisEkholmEliashbergEffectofLegendriansurgery,EtnyreOnContactSurgery,CasalsEtnyreKegelSteintraces,WZnegativecontactsurgery}, while Q2 to the best of authors' knowledge has only been systematically studied in an upcoming paper \cite{EtnyreMinTosunVarvarezosPre}. In general, those are also very hard questions, and we are trying to answer the Heegaard Floer analog of those two questions when the underlying $3$-manifold is $S^3$; that is, how the Legendrian LOSS invariant \cite{LOSS} of $L$ affects the contact invariant \cite{OSHFandcontactstructures,HKMcontactclassinHF} of the Legendrian surgery on $L$. We first give the important definition. 

\begin{definition}
   An \emph{admissible} Legendrian knot $L$ is a Legendrian knot in some contact structure on $S^3$ such that $\tb(L)\neq 1$ and $\tb(L)-\rot(L)=2g(L)-1$, where $g(L)$ is the $3$-genus of the smooth knot type of $L$.
\end{definition}

Here are several remarks about this definition.

\begin{remark} 
\textbf{}
\begin{enumerate}
\item The condition on $\tb(L)\neq 1$ ensures that the Legendrian surgery on $L$ yields a rational homology sphere. 

\item If $L$ satisfies $\tb(L)-\rot(L)=2g-1$ then the LOSS invariant of $L$ lives in the top $g$ grading of $\HFKm(-S^3,K)$ \cite[Theorem 1.6]{OSct}. 
\item If $L$ is admissible then negative stabilization of $L$ is also admissible, except when $\tb(L)=2$, and if $\tb(L)=2$ then twice-negatively stabilized $L$ is also admissible.
\end{enumerate}
    
\end{remark}

For admissible Legendrian knots, we can answer $Q_1$ and $Q_2$ in the Heegaard Floer setting.

\begin{theorem} \label{thm: LOSS determines contact for admissible Legendrian}
 Let $L$ and $L'$ be two admissible Legendrian knots with the same knot type and same Thurston–Bennequin number.  
 \begin{enumerate}
     \item If $L$ and $L'$ have distinct LOSS invariants, then the Legendrian surgeries on $L$ and $L'$ have distinct contact invariants.
     \item If $L$ has non-vanishing LOSS invariant, then the Legendrian surgery on $L$ has non-vanishing contact invariant.
 \end{enumerate}
\end{theorem}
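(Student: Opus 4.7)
The plan is to realize the contact invariant of the Legendrian surgery as the image of the LOSS invariant under a natural map coming from the Ozsváth--Szabó mapping cone surgery formula, and then to use the admissibility hypothesis to show this map is injective on LOSS classes. Set $n = \tb(L) - 1 \neq 0$, so that Legendrian surgery on $L$ yields a contact structure $\xi_L$ on the rational homology sphere $Y = S^3_n(K)$. By Remark 1.4(2), the LOSS invariant $\mathfrak{L}(L) \in \HFKm(-S^3, K)$ lies in the top Alexander grading $g := g(L)$, so it lifts canonically to a cycle in the subquotient $A^-_g$ of $\CFKm(-S^3,K)$ appearing at the top node of the mapping cone that computes $\HFm(-Y,\mathfrak{s}_L)$, where $\mathfrak{s}_L$ is the Spin$^c$ structure on $Y$ picked out by $\rot(L)$.

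The central step is to identify $c(\xi_L)$ with $\Psi(\mathfrak{L}(L))$, where $\Psi \co \HFKm(-S^3,K,g) \to \HFm(-Y,\mathfrak{s}_L)$ is the natural projection from the top node of the mapping cone. Geometrically, Legendrian surgery is a contact $2$-handle attachment yielding a cobordism $W \co -S^3 \to -Y$ whose induced Heegaard Floer map sends the standard contact generator of $\HFa(-S^3)$ to $c(\xi_L)$. On the algebraic side, admissibility forces this cobordism map to factor through the top Alexander filtration level of $\CFKm$, which is precisely where $\mathfrak{L}(L)$ lives. Remark 1.4(3) allows one to reduce to sufficiently negatively stabilized Legendrians, where chain-level comparisons between the LOSS invariant and surgery contact invariants become tractable; combined with the naturality of LOSS under stabilization, this yields the identification for all admissible $L$.

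Given the identification $c(\xi_L) = \Psi(\mathfrak{L}(L))$, both statements follow from injectivity of $\Psi$ on the top Alexander graded piece. This is structural: in the mapping cone, the only differentials that could kill a class in $A^-_g$ would need to come from Alexander gradings strictly above $g$, where $\HFKm$ vanishes; hence distinct top-graded LOSS cycles map to distinct contact invariants, and nonzero ones remain nonzero. The main obstacle is the central identification step --- matching the geometric cobordism map with the algebraic projection $\Psi$ uniformly across admissible $L$, with correct Spin$^c$ bookkeeping. Here the admissibility hypothesis is exactly what confines the entire analysis to the top Alexander grading, where the mapping cone simplifies enough for the comparison to go through.
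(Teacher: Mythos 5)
There is a genuine gap, in fact two. First, your central identification $c(\xi_L) = \Psi(\mathfrak{L}(L))$ is asserted rather than proved, and the map $\Psi$ itself is problematic: in the mapping cone computing the surgery, the $A_s$-nodes form a \emph{quotient} complex, so a class in $H_*(A^-_g)$ only lifts to the homology of the cone if it is killed by $(v_g+h_g)_*$, and even then the lift is only well-defined modulo the image of the $B$-nodes. So "the natural projection from the top node" is not a well-defined injection for structural reasons, and your stated justification (that $\HFKm$ vanishes above grading $g$) does not address the actual danger, which comes from the mapping-cone differentials $v_g, h_g$ and from internal differentials landing in the top $(\II,\JJ)$-filtration level. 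The paper's Proposition \ref{Prop: inject at top grading} exists precisely to rule this out, and its proof is a nontrivial filtered change-of-basis argument ending in a contradiction with $\dim \HFa(S^3)=1$; it is not formal. Moreover, the paper does not relate $c(\xi_L)$ to $\mathfrak{L}(L)\in\HFKm(-S^3,K)$ directly through the cone; it instead passes to the Legendrian push-off $L'$ (the dual knot $\mu$ in the surgered manifold), uses naturality (Corollary \ref{cor: Legen surgery on distinct loss give distinct loss}) to transport distinctness of LOSS invariants to $\mathfrak{L}(L')$, and then applies the standard basepoint-swapping map $f:\HFKm(-Y,L')\to\HFa(-Y)$ of Lemma \ref{lem: f maps rational LOSS to Contact invariant}, which is already known to send $\mathfrak{L}(L')$ to $c(\xi_L)$. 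Your proposal would need to supply a substitute for this entire chain.

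Second, and more seriously, your argument implicitly assumes the smooth surgery coefficient $n=\tb(L)-1$ is negative (so that the surgered manifold is a positive surgery on the mirror and the Hedden--Levine/mapping-cone analysis at the top Alexander grading applies). Admissibility only requires $\tb(L)\neq 1$, so $\tb(L)\geq 2$ is allowed, in which case the smooth coefficient is positive and no node of the cone receives $\mathfrak{L}(L)$ in the manner you describe. This is exactly where the paper has to work hardest: Cases 2 and 3 of its proof reduce $\tb\geq 2$ to the $\tb\leq 0$ case via negative stabilizations, the Ding--Geiges--Stipsicz algorithm, contact $(+2)$- and $(+3)$-surgeries, and naturality of the contact invariant, with $\tb=2$ requiring a separate trick because an intermediate manifold fails to be a rational homology sphere. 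Your proposal does not engage with this case at all.
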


Note that Theorem \ref{thm: LOSS determines contact for admissible Legendrian} (1) recovers the results of \cite[Theorem 1.1]{WZnegativecontactsurgery}. Moreover, many known examples of Legendrian knots with same knot type, $\tb$, and $\rot$ but different LOSS invariants are admissible, for example see \cite[Theorem 5.7]{OSct} \cite[Theorem 5.3]{WanNaturalityofLOSSinvariant} \cite[Theorem 1.1]{FoldvariLegenriantwobridge}. Thus, Legendrian surgeries on any pair of those Legendrian knots give different contact manifolds with different contact invariants.

The second part of Theorem \ref{thm: LOSS determines contact for admissible Legendrian} is the key to proving Theorem \ref{thm: negative surgery on knot is tight}. However, to make it work, we need examples of admissible Legendrian with non-vanishing LOSS invariant. It turns out that every knot has admissible Legendrian representatives in some contact structure on $S^3$.


\begin{theorem}\label{thm: 2g-1 non-loose knot}
   For any knot $K\in S^3$ and $k\in \mathbb{Z}_{\geq 0}$, there exists some contact structure on $S^3$ such that $K$ has an admissible Legendrian representative $L^{-k}$ with $\tb(L^{-k})=-k$ and $\rot(L^{-k})=-k-2g+1$. Moreover, for each $k$, $L^{-k}$ has non-vanishing LOSS invariant in $\HFKm (-S^3,K,g(K))$ and $S_{-}(L^{-k})$; the negative stabilization of $L^{-k}$, is $L^{-k-1}$. In particular, when $\tau(K)<g(K)$, this contact structure is overtwisted and $L^{-k}$ is a  non-loose Legendrian knot.
\end{theorem}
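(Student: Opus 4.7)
The plan is to first construct a Legendrian $L^{0}$ of $K$ in some contact structure $\xi^{0}$ on $S^{3}$ with $\tb(L^{0})=0$, $\rot(L^{0})=-(2g-1)$, and non-vanishing LOSS invariant in $\HFKm(-S^{3},K,g(K))$, and then to set $L^{-k}:=S_{-}^{k}(L^{0})$. The standard stabilization formulas then give $\tb(L^{-k})=-k$ and $\rot(L^{-k})=-k-2g+1$; admissibility is automatic since $\tb(L^{-k})-\rot(L^{-k})=2g-1$ and $\tb(L^{-k})\leq 0\neq 1$; and since negative stabilization preserves the LOSS invariant in the minus theory (up to a unit), $\mathfrak{L}(L^{-k})\neq 0$ for every $k\geq 0$ whenever $\mathfrak{L}(L^{0})\neq 0$.

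To construct $(\xi^{0},L^{0})$, I would fix a genus-$g$ Seifert surface $\Sigma$ of $K$ and realize $\Sigma$ as a convex surface with Legendrian boundary $L^{0}=K$ in a contact structure $\xi^{0}$ on $S^{3}$, arranged so that the dividing set $\Gamma_{\Sigma}$ has no arc components meeting $\partial\Sigma$ and so that $\chi(\Sigma_{-})=0$. The standard convex-surface formulas then yield $\tb(L^{0})=-\tfrac{1}{2}|\Gamma_{\Sigma}\cap\partial\Sigma|=0$ and $\rot(L^{0})=\chi(\Sigma_{+})-\chi(\Sigma_{-})=\chi(\Sigma)=-(2g-1)$. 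Such a pair $(\xi^{0},L^{0})$ can be produced by an explicit construction combining a $\tb$-maximizing representative of $K$ in $(S^{3},\xi_{std})$ with appropriate Lutz twists along a transverse push-off, or alternatively via an open book on $S^{3}$ in which $L^{0}$ sits on the binding with $\Sigma$ as (part of) a page.

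The technical heart is to show $\mathfrak{L}(L^{0})\neq 0$ in Alexander grading $g(K)$. I would use the Honda--Kazez--Mati\'c identification of $\mathfrak{L}(L^{0})$ with the EH contact class of the sutured knot complement $(S^{3}\setminus\nbd(L^{0}),\Gamma_{\mu})$ equipped with the restricted contact structure, which lives in $\mathrm{SFH}(-(S^{3}\setminus\nbd(L^{0})),-\Gamma_{\mu})\cong\HFKm(-S^{3},K,g(K))$ in Alexander grading $g(K)$. A sutured decomposition along the convex Seifert surface $\Sigma$, using that $\Sigma_{+}=\Sigma\setminus\nbd(\Gamma_{\Sigma})$ and that $\Gamma_{\Sigma}$ has no boundary-parallel arcs, cuts the complement into a taut balanced sutured manifold $(M',\gamma')$ whose sutured Floer homology is non-trivial by Ni's theorem that $\HFKa(K,g(K))\neq 0$; by naturality of the contact class under convex decomposition, $\mathfrak{L}(L^{0})$ is identified with the EH class of $(M',\gamma')$, which a direct partial-open-book computation shows to be non-zero.

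For the overtwistedness and non-looseness when $\tau(K)<g(K)$, if $\xi^{0}$ were tight then Eliashberg's uniqueness of the tight contact structure on $S^{3}$ would give $\xi^{0}\cong\xi_{std}$, whence Plamenevskaya's $\tau$-refined Bennequin inequality would force $\tb(L^{0})+|\rot(L^{0})|\leq 2\tau(K)-1$, i.e.\ $2g-1\leq 2\tau(K)-1$, contradicting $\tau(K)<g(K)$. So $\xi^{0}$ is overtwisted, and each $L^{-k}$ is non-loose because a loose Legendrian has vanishing LOSS invariant. The main obstacle I expect to face is the non-vanishing step: carrying out the sutured decomposition carefully and explicitly identifying the resulting EH class as the non-zero generator of the top Alexander-grading summand of $\HFKm(-S^{3},K,g(K))$.
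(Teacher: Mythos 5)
Your outline gets the formal bookkeeping right (stabilization formulas, admissibility, invariance of $\mathfrak{L}$ under negative stabilization, and the overtwistedness argument via the tight $S^3$ being standard plus the $\tau$-Bennequin bound, which matches the paper's reasoning), but there is a genuine gap at exactly the point you flag as the ``technical heart'': you never actually prove $\mathfrak{L}(L^0)\neq 0$. Your plan is to first build a contact structure $\xi^0$ on $S^3$ by some explicit means (Lutz twists, or convex realization of the Seifert surface), then decompose along $\Sigma$ and verify by ``a direct partial-open-book computation'' that the resulting EH class is non-zero. This order of operations does not work: for an arbitrarily constructed $\xi^0$ there is no reason the restriction to the complement of $\Sigma$ is tight, let alone has non-vanishing contact class --- a generic overtwisted $\xi^0$ will restrict to something with vanishing class, and the complement of a Seifert surface is in general a handlebody with a complicated suture whose $SFH$ has large rank, so knowing (via Ozsv\'ath--Szab\'o/Ni genus detection) that $SFH\neq 0$ tells you nothing about where a particular contact class sits. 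The paper reverses the logic: it runs Gabai's sutured hierarchy from $(S^3\setminus N(S),\gamma^1)$ all the way down to taut sutured balls, where the tight contact structure is unique and its class generates $SFH(-B^3,-\delta)\cong\mathbb{F}$, and then glues the contact structure back up the hierarchy using the Honda--Kazez--Mati\'c gluing theorem, which guarantees non-vanishing of the contact element by construction (Lemma \ref{lem: contact str with non-zero contact element}). The contact structure on $S^3$ is then \emph{defined} as the result of this gluing plus a standard solid torus, rather than being produced first and analyzed afterwards.

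A second, smaller gap: even granting a non-zero sutured contact class on $(S^3\setminus N(K),\Gamma_k)$, you still need to land in $\HFKm(-S^3,K,g(K))$ rather than in a single sutured group. The paper does this by showing the whole family $\{c(\xi_k)\}_{k\geq 0}$ is coherent under the negatively signed basic slice maps $\psi^k_{-,k+1}$, which are isomorphisms in the top grading, so the family defines a non-zero element of the direct limit computing $\HFKm$; this simultaneously identifies $S_-(L^{-k})$ with $L^{-k-1}$ and pins down $\rot(L^{-k})$ from the grading $g(K)+\frac{k-1}{2}$ of $c(\xi_k)$. Your proposal treats the $\tb=0$ case in isolation and derives $\rot$ from convex surface formulas for a surface you have not actually exhibited; the basic-slice/direct-limit mechanism is what makes both the rotation number computation and the $\HFKm$ non-vanishing rigorous.
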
 


    
Now by combining Theorem \ref{thm: 2g-1 non-loose knot} with Theorem \ref{thm: LOSS determines contact for admissible Legendrian} (2), we can perform Legendrian surgeries on those families of admissible Legendrian knots, and we immediately obtain the following corollary, which implies the integer version of Theorem \ref{thm: negative surgery on knot is tight}.

\begin{corollary}
    
\label{thm: Legendrian surgery with non-vanishing contact invariant}

For any knot $K\in S^3$ and $n\in \mathbb{Z}_{> 0}$, $S^3_{-n}(K)$ admits at least one tight contact structure with non-vanishing contact invariant. 
Moreover, if $\tau(K)<g(K)$ those contact structures are coming from Legendrian surgeries on non-loose Legendrian representatives of $K$.
\end{corollary}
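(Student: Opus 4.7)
The plan is to assemble the stated theorems directly, with the only arithmetic adjustment being the relationship between the Legendrian surgery coefficient and the Thurston--Bennequin number. Fix a knot $K \subset S^3$ and an integer $n \in \mathbb{Z}_{>0}$.

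First, I would apply Theorem \ref{thm: 2g-1 non-loose knot} with $k = n-1 \geq 0$. This produces a contact structure on $S^3$ in which $K$ has an admissible Legendrian representative $L^{-(n-1)}$ with $\tb(L^{-(n-1)}) = -(n-1) = 1-n$ and $\rot(L^{-(n-1)}) = -(n-1) - 2g(K) + 1$, and whose LOSS invariant in $\HFKm(-S^3, K, g(K))$ is non-vanishing. Admissibility is automatic: the condition $\tb - \rot = 2g-1$ holds by construction, and $\tb(L^{-(n-1)}) = 1 - n \leq 0 \neq 1$ for any $n \geq 1$.

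Next I would invoke Theorem \ref{thm: LOSS determines contact for admissible Legendrian}(2), which applies to any admissible Legendrian with non-vanishing LOSS invariant. It immediately yields that the Legendrian surgery on $L^{-(n-1)}$ has non-vanishing Heegaard Floer contact invariant. Since Legendrian surgery on a Legendrian knot of Thurston--Bennequin number $t$ corresponds to smooth $(t-1)$-surgery on the underlying knot, this contact manifold has underlying smooth $3$-manifold $S^3_{\tb - 1}(K) = S^3_{-n}(K)$. Non-vanishing of the contact invariant implies tightness (by the well-known result of Ozsv\'ath--Szab\'o), so $S^3_{-n}(K)$ carries a tight contact structure with non-vanishing contact invariant.

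For the moreover clause, I would simply quote the remainder of Theorem \ref{thm: 2g-1 non-loose knot}: when $\tau(K) < g(K)$, the ambient contact structure on $S^3$ in which $L^{-(n-1)}$ lives is overtwisted and $L^{-(n-1)}$ is non-loose. Thus the tight contact structure constructed above on $S^3_{-n}(K)$ is obtained by Legendrian surgery on a non-loose Legendrian representative of $K$. No step in this argument looks like an obstacle; the entire content has been absorbed into Theorems \ref{thm: LOSS determines contact for admissible Legendrian} and \ref{thm: 2g-1 non-loose knot}, and the only thing to check is the bookkeeping identity $\tb - 1 = -n$ for $\tb = 1-n$.
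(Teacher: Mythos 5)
Your proposal is correct and is exactly the argument the paper intends: the paper states the corollary follows "immediately" by combining Theorem \ref{thm: 2g-1 non-loose knot} (applied with $k=n-1$, so $\tb=1-n$ and Legendrian surgery is smooth $(-n)$-surgery) with Theorem \ref{thm: LOSS determines contact for admissible Legendrian}(2), and your write-up just makes that bookkeeping explicit. The admissibility check you perform is already asserted in the statement of Theorem \ref{thm: 2g-1 non-loose knot}, so it is redundant but harmless.
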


Using the Ding-Geiges-Stipsicz \cite{DGS} algorithm for negative rational contact surgeries and the fact that a Legendrian surgery preserves the non-vanishing of contact invariant (or see \cite[Section 5.3]{WZnegativecontactsurgery}), we can easily upgrade the above theorem to any non-negative rational number, and hence obtain Theorem \ref{thm: negative surgery on knot is tight}. Thus, the main goal of the rest of the paper is to prove Theorem \ref{thm: LOSS determines contact for admissible Legendrian} and Theorem \ref{thm: 2g-1 non-loose knot}.

Before we move on, some more concrete examples we can consider are the Legendrian non-loose negative torus knots which are classified in \cite{EtnyreMinMukherjeeNonloosetorus}. By performing Legendrian surgeries on certain non-loose representatives, we also obtain the following corollary. (Compare the result in \cite[Theorem 1.12]{min2024contactinvariantsborderedfloer})   



\begin{corollary}
    If $K$ is a negative torus knot, then for any integer $n$, $S^3_n(K)$ admits a tight contact structure.
\end{corollary}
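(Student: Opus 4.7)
The plan is to combine Theorem~\ref{thm: LOSS determines contact for admissible Legendrian}(2) with the classification of non-loose Legendrian negative torus knots from \cite{EtnyreMinMukherjeeNonloosetorus}. The cases $n \le 0$ are already covered: for $n<0$ by Theorem~\ref{thm: negative surgery on knot is tight}, and for $n=0$ by the corollary stated immediately after it. So the essential case is positive integer $n\ge 1$.

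For each $n\ge 1$, the aim is to produce an admissible Legendrian representative $L_n$ of $K$ with $\tb(L_n)=n+1$, $\rot(L_n)=n+2-2g(K)$, and non-vanishing LOSS invariant. Once $L_n$ is in hand, Legendrian surgery on it realizes $S^3_{n}(K)$ smoothly --- since the surgery coefficient equals $\tb(L_n)-1=n$ --- and Theorem~\ref{thm: LOSS determines contact for admissible Legendrian}(2) then delivers a contact structure on $S^3_n(K)$ with non-vanishing contact invariant, in particular tight. Because the maximal Thurston--Bennequin number of a negative torus knot in $(S^3,\xi_{\mathrm{std}})$ is strictly negative, $L_n$ cannot live in the standard tight structure; it must be drawn from an overtwisted contact structure, and for its complement to be tight (a prerequisite for non-vanishing LOSS) it must be non-loose.

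The key geometric input is the Etnyre--Min--Mukherjee classification, which supplies, for each positive integer $\tb$, non-loose Legendrian representatives of a negative torus knot realizing all admissible rotation numbers, and in particular the extremal one required by admissibility. The main obstacle is to verify non-vanishing of the LOSS invariant of these specific non-loose representatives, since the EMM classification is stated in purely contact-topological terms. A natural route is to realize each $L_n$ as the result of applying a sequence of stabilizations and a Lutz twist along a transverse push-off to a maximal-$\tb$ Legendrian in $(S^3,\xi_{\mathrm{std}})$, whose LOSS invariant equals the non-zero Ozsv\'ath--Szab\'o transverse invariant, and then to track the invariant through each step using the stabilization behavior of LOSS recorded in \cite{OSct,LOSS}. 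This tracking is especially clean for torus knots because $\HFKm(-S^3,K,g(K))$ is one-dimensional in the top Alexander grading, so non-triviality is automatically preserved. An alternative that bypasses this Floer-theoretic bookkeeping --- available in hindsight once Theorem~\ref{thm: negative surgery on knot is tight} is established --- is to observe that for $K=T(-p,q)$ the mirror $\bar K=T(p,q)$ is a positive torus knot, $S^3_n(K)\cong -S^3_{-n}(\bar K)$ as oriented manifolds, and tightness is preserved under orientation reversal; but this shortcut produces tight structures without the Legendrian-surgery interpretation emphasized here.
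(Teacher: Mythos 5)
Your overall strategy — invoke the Etnyre--Min--Mukherjee non-loose representatives and feed them into Theorem~\ref{thm: LOSS determines contact for admissible Legendrian}(2), handling $n\le 0$ separately — is the same as the paper's, and your arithmetic ($\tb=n+1$, $\rot=n+2-2g$, so $\tb-\rot=2g-1$ and $\tb\neq 1$) is correct. But the step you yourself flag as ``the main obstacle,'' the non-vanishing of the LOSS invariant of these non-loose representatives, is where your argument has a genuine gap, and the route you sketch for it does not work. Stabilization strictly \emph{decreases} $\tb$, so no sequence of stabilizations applied to a maximal-$\tb$ Legendrian in $(S^3,\xi_{\mathrm{std}})$ (which has negative $\tb$ for a negative torus knot) can produce a representative with $\tb=n+1>0$; and there is no naturality statement for the LOSS invariant under a Lutz twist that would let you ``track'' it through that operation. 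One-dimensionality of $\HFKm(-S^3,K,g(K))$ does not rescue this: the question is whether the invariant is zero, and zero lives in a one-dimensional space. The paper closes this gap with a specific geometric input: the EMM classification identifies the relevant non-loose $L^i$ (with $\tb(L^i)=i$, $\rot(L^i)=i-2g+1$, each the negative destabilization of the next) as having transverse push-off equal to the binding $T$ of the open book supported by $K$, and Vela-Vick's theorem says the transverse invariant of a binding is non-zero; since the transverse invariant of the push-off agrees with $\mathfrak{L}(L^i)$ (which is unchanged by negative stabilization, Theorem~\ref{LOSS invariant under negative stabilization}), all the $\mathfrak{L}(L^i)$ are non-zero. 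You need this, or something equivalent, to make the argument go through.

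Your proposed ``alternative shortcut'' is false and should be discarded: tightness is \emph{not} preserved under orientation reversal. The manifolds $S^3_{2n-1}(T_{2,2n+1})$ admit no tight contact structure (Lisca--Stipsicz, cited in the introduction), while their orientation reversals are negative surgeries on negative torus knots and do admit tight structures by Theorem~\ref{thm: negative surgery on knot is tight}. So $S^3_n(K)\cong -S^3_{-n}(\bar K)$ gives you nothing.
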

\begin{remark}
    This result follows from \cite{LiscaStipsicztightSeifert}, but we indicate how our arguments work for surgery on non-loose knots.
\end{remark}

\begin{proof}
    Let $T$ be the transverse representative of $K$ that corresponds to the binding of open book supported by $K$. In \cite{EtnyreMinMukherjeeNonloosetorus}, it has been shown that for all $i\in \mathbb{Z}$, there exists $L^i$ such that $\tb(L^i)=i$, $\rot(L^i)=i-2g(K)+1$, negative stabilization of $L^i$ is $L^{i-1}$, the transverse push-off of $L_i$ is $T$, and $L^i$ has non-vanishing LOSS invariant\cite{VelaVicktransverseinvariantofopenbook}. Thus, Theorem \ref{thm: LOSS determines contact for admissible Legendrian} implies that the Legendrian surgery on $L^i$ is tight except possibly when $i=1$ ($n=0$). However, again the $n=0$ case follows from the fact that $S^3_0(K)$ always admits a taut foliation, hence a tight contact structure\cite{GabaiFoliaationsIII} \cite{EliashbergThurstonConfoliations} . 
    \end{proof}
 
\subsection*{Organization} In Section \ref{sec:hfpreli} we review the preliminaries for the Heegaard Floer homology. In Section \ref{sec:contactpreli} we discuss the LOSS invariant of rationally null-homologous Legendrian knot, and the naturality result of both LOSS and contact invariant. In Section \ref{sec: Negative contact surgeries on thin knots} we prove Proposition \ref{Prop: inject at top grading}, an important technical result in Heegaard Floer homology. In Section \ref{sec: proof of LOSS determines contact for admissible Legendrian} we prove Theorem \ref{thm: LOSS determines contact for admissible Legendrian} using  Proposition \ref{Prop: inject at top grading}. In Section \ref{sec: construction of admissible Legendrian} we review  some background on contact suture manifold and then give the proof of Theorem \ref{thm: 2g-1 non-loose knot}. Lastly, in Section \ref{sec: proof of negative surgery on knot is tight} we prove  Theorem \ref{thm: negative surgery on knot is tight}.
\subsection*{Acknowledgement} The authors would like to thank John Etnyre and Jen Hom for helpful discussions. HZ is supported by an AMS-Simons travel grant.

\section{Heegaard Floer Homology Preliminaries} \label{sec:hfpreli}
We briefly review Heegaard Floer homology in this section. The goal is to introduce the dual knot surgery formula in Section 
\ref{subsec: dualknot}, and to define the contact and the Loss invariant in Section \ref{sec:contactpreli}.
\subsection{Heegaard Floer invariants for three-manifolds and knots}
Given a closed oriented $3$-manifold $Y$ with a fixed basepoint $w$, described by a  tuple $(\Sigma, \alpha,\beta, w)$, where $\Sigma \subset Y$ is a smoothly embedded genus-$g$ surface, each of $\alpha$ and $\beta$ is a collection of $g$ pair-wise disjoint simple closed curves on $\Sigma$,  Ozsv\'{a}th-Szab\'{o} \cite{OSht} defined $\CF^\circ (Y)$ with four different flavors $\circ = \wedge,+,-$ and $\infty$, called the \emph{Heegaard Floer chain complex}. The generators for $\CF^\circ (Y)$ are the intersections of two Lagrangian tori constructed from $\alpha$ and $\beta$ curves in $\operatorname{Sym}^g(\Sigma)$ and the differentials are given by counting holomorphic disks. The invariant  $\CFa(Y)$ is a chain complex over $\F,$ where $\F = \Z/2\Z$ is the field with two elements; the invariants $\CFm(Y)$ and $\CFp(Y)$ are chain complexes over the ring $\F[U]$ and $\CFi(Y)$ is a chain complex over the ring $\F[U,U^{-1}];$ each complex admits a Maslov grading.    By taking the homology of $\CF^\circ (Y)$, we obtain the module $\HF^\circ(Y)$, called the \emph{Heegaard Floer homology of $Y$}.  

Given a knot $K\subset Y$,  the Heegaard Floer complexes admit a refinement to a knot invariant \cite{OSknot,rasmussen_thesis}.   Each pair $(Y,K) $ is described by $(\Sigma, \alpha,\beta, w,z)$ where $z$ is an extra basepoint, which allows
us to  impose an $(i,j)$ double-filtration over the original chain complex $\CF^\infty (Y)$. The resulting chain complex over $\F[U,U^{-1}]$ is graded and doubly filtered, denoted by $\CFKi(Y,K)$. It is  called the \emph{full knot Floer chain complex}, since other versions of the knot invariants can be obtained as a sub- or quotient complex of it. 

The complex  $\CFKi(Y,K)$ consists of formal elements $x=[\x,i,j],$ where $\x$ is an intersection point of the Lagrangian tori, where
$i$ is the \emph{algebraic filtration level}, $j$ is the \emph{Alexander filtration level}, and $U$ acts by $Ux = [\x,i-1,j-1]$. The \emph{Alexander grading} of $x=[\x,i,j]$ is defined to be $j-i$. The differential in $\CFKi(Y,K)$ counts the number of times a disk intersects the basepoints: $i$-filtration is decreased by $1$ each time the disk crosses $w$, and $j$-filtration is decreased by $1$ each time the disk crosses $z$. 

The group of $\Spin^c$ structures of a closed $3$-manifold $Y$ admits a non-canonical isomorphism  $\Spin^c(Y) \cong H_1(Y;Z)$. The Heegaard Floer chain complexes and knot Floer chain complexes split over the $\Spin^c$ structures. Namely, 
\[
\CF^\circ(Y) = \bigoplus_{\spincs \in \Spin^c(Y)} \CF^\circ(Y,\spincs)  \hspace{3em}  \CFK^\circ(Y,K) = \bigoplus_{\spincs \in \Spin^c(Y)} \CFK^\circ(Y,K,\spincs).
\]
Let $C= \CFKi(Y,K).$ We denote a sub/quotient complex of $C$  by $C\{S\}:= \{[\x,i,j]\in C \mid  (i,j)\in S\}$ for some $S$. 
There are relations between different complexes.  If we do not allow disks to cross $w$ basepoint, and also forget the filtration coming from the $z$ basepoint, we obtain
\[\CFa(Y,K) =  C\{i=0\}.\] 
Define 
\[
  \CFKm_s(Y) = C\{i\leq 0, j=s\} \text{and} \quad \CFKm(Y,K) =\bigoplus_{s\in \Z}  \CFKm_s(Y,K)
\]
where elements inherit their Alexander grading from $\CFKi(Y,K)$. Define
\[
  \HFKm_s(Y,K) =H_*(\CFKm_s(Y,K)) \quad \text{and} \quad \HFKm(Y,K) =\bigoplus_{s\in \Z}  \HFKm_s(Y,K).
\]
 Acting by $U$ sends $\CFKm_s(Y,K)$ to  $\CFKm_{s-1}(Y,K)$ which induces a map on $\HFKm (Y,K)$. Setting $U=1$ in $\CFKm (Y,K)$, we obtain $C\{j=0\}$. 
 
In \cite{LOSS}, Lisca-Ozsv\'ath-Stipsicz-Szab\'o considered a map

\begin{equation} \label{equ: g map}
    g:\CFKm(Y,K) \rightarrow \CFa(Y)
\end{equation}
by setting $U=1$, as above,  then swapping the role of $z$ and $w$, and thereby identifying $C\{j=0\}$ with $C\{i=0\}=\CFa(Y)$.
This map changes the $\Spin^c$ structure in the following way. 
If $x\in \CFKi(Y,K,\underline\spincs)$ where $\underline\spincs \in \underline\Spin^c(Y,K)$ is a relative $\Spin^c$ structure, by swapping $z$ and $w$, the relative $\Spin^c$  structure of $x$ changes from  $\underline\spincs$ to $J(\underline\spincs)-\PD(\mu)$ where $J$ denotes the $\Spin^c$ conjugation and  $\mu$ is the right-handed meridian of $K$. (See the proof of \cite[Proposition 8.2]{OSHololinkinvariants} and \cite[Equation 2.9]{HeddenLevine}. We are following the convention from \cite{HeddenLevine}.) In summary, $g$ splits over the  $\Spin^c$  structure as follows \begin{equation}  \label{eq: cfkminus_to_cfhat} g:\CFKm(Y,K,G_{Y,K}(\underline\spincs)) \rightarrow \CFa(Y, G_{Y,K}(J(\underline\spincs)-\PD(\mu)))
\end{equation}
where $G_{Y,K}: \underline\Spin^c(Y,K) \rightarrow \Spin^c(Y)$ is the map defined by  Ozsv\'ath-Szab\'o in \cite[Section 2.2]{rational}. 

When $K$ is null-homologous, $\underline\Spin^c(Y,K) \cong \Spin^c(Y) \oplus \Z$ where $\Z$ is generated by $\PD(\mu)$ and $G_{Y,K}$ is a projection on to the first summand. Therefore if $ G_{Y,K}(\underline\spincs) = \spincs  $, we simply have $G_{Y,K}(J(\underline\spincs)-\PD(\mu)) = J(\spincs)$. For integer homology spheres, e.g. $S^3$, since there is a unique spin$^c$ structure, $\spincs$ and $J(\spincs)$ are the same.

\subsection{Dual knot surgery formula} \label{subsec: dualknot}
In \cite{HeddenLevine}, Hedden and Levine defined a mapping cone formula that for $n \neq 0$ computes  $\CFKi(S_n^3(K),\mu)$, where $\mu$ is the meridian (or dual knot) of $K$ in the surgery. 
Define $\X^{\infty}_K(C)$ to be the mapping cone of
 \begin{align} \label{eq:x_infinity2}
      \bigoplus^{g}_{s=-g+1}A_s(C) \xrightarrow{v^{\infty}_s+h^{\infty}_s} \bigoplus^{g}_{s=-g+n+1}B_s(C).
\end{align}
where each $A_s$ and $B_s$ is a copy of $\CFKi(S^3,K).$
Define the double filtration $(\II, \JJ)$ and the Maslov grading over $\X^{\infty}_K(C)$ as follows.
\begin{align}
\intertext{For $[\x,i,j] \in A_{s}(C)$,}
\label{eq: filtration_s3_1}
 \II([\x,i,j]) &= \max\{i,j-s\} \\
 \label{eq: filtration_s3_2}
 \JJ([\x,i,j]) &= \max\{i-1,j-s\} + \frac{2s+n-1}{2n} \\
\label{eq: grt-def-A} \gr([\x,i,j]) &= \widetilde{\gr}([\x,i,j]) + \frac{(2s-n)^2}{4n} + \frac{2-3\operatorname{sign}(n)}{4}  
\intertext{and for $[\x,i,j] \in B_{s}(C)$,}
 \label{eq: filtration_s3_3}
 \II([\x,i,j]) &= i \\
  \label{eq: filtration_s3_4}
 \JJ([\x,i,j]) &= i-1 + \frac{2s+n-1}{2n}
  \\ \label{eq: grt-def-B} \gr([\x,i,j]) &= \widetilde{\gr}([\x,i,j]) + \frac{(2s-n)^2}{4n} + \frac{-2-3\operatorname{sign}(n)}{4}
\end{align}
where $ \widetilde{\gr}$ indicates the Maslov grading  in the original complex.
\begin{theorem}[Theorem 1.1 in \cite{HeddenLevine}]
    The complex $\CFKi(S^3_{n}(K),\mu)$ is filtered chain homotopy equivalent to $\X_K^{\infty}(\CFKi(S^3,K))$.
\end{theorem}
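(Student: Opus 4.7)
The plan is to follow Ozsv\'ath--Szab\'o's proof of the integer surgery formula for $\CFi$ and enhance it by tracking a second basepoint that records the Alexander filtration of the dual knot $\mu$. I would start from a doubly-pointed Heegaard diagram $(\Sigma,\alpha,\beta,w,z)$ for $(S^3,K)$ in which the meridian of $K$ appears as the last attaching curve $\beta_g$. To perform $n$-surgery, I replace $\beta_g$ by the curve $\gamma_n$ of slope $n$ on the boundary torus, obtaining a Heegaard diagram for $S^3_n(K)$. The dual knot $\mu$ is then represented by the original $\beta_g$, which now becomes an auxiliary curve; placing a second basepoint on the opposite side of $\beta_g$ from $w$ gives a doubly-pointed diagram for $(S^3_n(K),\mu)$ from which $\CFKi(S^3_n(K),\mu)$ can be read off directly.

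Next, I would reconstruct the large-surgery truncation and the associated mapping-cone decomposition in this enhanced setup. Passing through triple Heegaard diagrams that relate $\beta_g$, $\gamma_n$, and a framing curve of very large slope, and counting holomorphic triangles, identifies the bifiltered complex $\CFKi(S^3_n(K),\mu)$ with a mapping cone whose $A_s$ summand is a shifted bifiltered version of $\CFKi(S^3,K)\{\max(i,j-s)\geq 0\}$ and whose $B_s$ summand is the subcomplex $\{i\geq 0\}$. The connecting map $v^\infty_s+h^\infty_s$ becomes the usual vertical-plus-horizontal arrow of the Ozsv\'ath--Szab\'o formula, now interpreted as a map of bifiltered complexes. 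The key technical input is that the triangles implementing the identification respect both $w$ and $z$, so that their counts realize the mapping cone in the filtered category and not only in the category of chain complexes.

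The explicit bigrading formulas for $\II,\JJ$ and the Maslov shifts are then obtained by computing intersection numbers of bounding disks with the two basepoints, relative to a reference generator in each $\Spin^c$ structure. The fractional shift $\tfrac{2s+n-1}{2n}$ in $\JJ$ records the linking of $\mu$ with the surgery slope together with the choice of $\Spin^c$ structure on $S^3_n(K)$, while the Maslov correction $\tfrac{(2s-n)^2}{4n}$ is the first-Chern-class term appearing in Ozsv\'ath--Szab\'o's absolute grading formula for rational homology spheres. The additional summand $\tfrac{\pm 2 - 3\operatorname{sign}(n)}{4}$ distinguishes $A$- from $B$-type generators and accounts for orientation conventions in the case $n<0$.

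The main obstacle is verifying that the Alexander filtration induced by the new basepoint $z$ matches the prescribed formula for $\JJ$ on the nose, not merely up to filtered chain homotopy, and propagating this filtration refinement consistently through the triangle maps used in the mapping-cone identification. One must also track how the $\Spin^c$ grading splits, since the $h$-map changes relative $\Spin^c$ structures via the $J$-conjugation and a $\PD(\mu)$ shift, and this interacts nontrivially with the $\JJ$-filtration. Upgrading the already delicate triangle-count arguments of the original surgery formula to the level of bifiltered chain complexes is where the bulk of the technical work will lie.
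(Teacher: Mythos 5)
This statement is quoted verbatim from Hedden--Levine and the paper offers no proof of it --- it is imported as background for the dual knot surgery formula. Your outline matches the strategy of the cited source's actual argument (rerun Ozsv\'ath--Szab\'o's integer surgery proof while tracking the second basepoint that records the Alexander filtration of $\mu$, then compute the $\II$, $\JJ$, and Maslov shifts from intersection numbers with the basepoints), so there is nothing to compare against within this paper; the only caveat is that what you have written is a plan that defers exactly the hard filtered triangle-count verifications you yourself identify, not a proof.
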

Collapsing the $\JJ$-filtration in $\X_K^{\infty}(C)$ recovers the  complex $\X^{\infty}(C)$ in the original construction \cite{OSIntegersurgery} by Ozsv\'{a}th  and Szab\'{o}, which is homotopy equivalent to $\CFi(S^3_n(K))$. 

 Consider the cobordism $W'_n: S^3_n(K) \to S^3$  obtained by turning around the two-handle cobordism $-W_n: -S^3 \to -S^3_n(K)$. Denote by $\Tilde{F}$  a capped Seifert surface of $K$ in $W'_n.$ 
By \cite[Section 2.4]{OSIntegersurgery},  
given $\spincs \in\Spin^c(S^3_n(K)) $, if we let $\underline\spincs$ denote the extension of $\spincs$ to $\Spin^c(W'_n(K))$, then 
$\Spin^c(S^3_n(K))  $ is identified with $\Z/n\Z$ through the map $\spincs \mapsto i \in \Z/n\Z$, where
\[\langle c_1(\underline\spincs),[\Tilde{F}] \rangle  - n \equiv 2i  \quad (\operatorname{mod} 2n ). \]

By \cite[Theorem 4.1]{OSIntegersurgery}, under the above identification, each $A_s, B_s$ in $\X_K^{\infty}(C)$ is in spin$^c$ structure $s \in \Z/n\Z \cong \Spin^c(S^3_n(K))  $. In other words,  for $s \in \Z/n\Z$, the mapping cone of
\[
\bigoplus_{-g+1 \leq s+n\ell \leq g}  A_{s+n\ell}(K) \xrightarrow{v^{\infty}_s+h^{\infty}_s} \bigoplus_{-g+n+1 \leq s+n\ell \leq g}  B_{s+n\ell}(K)
\]
is homotopy equivalent to the summand $\CFKi(S^3_n(K),\mu,s).$ Moreover, the spin$^c$ conjugation $J$ sends $s$ to $-s.$
Therefore for the manifold-knot pair $(S^3_n(K),\mu),$ the map \eqref{eq: cfkminus_to_cfhat} can be written as
  \begin{equation} 
g:\CFKm(S^3_n(K),\mu,s) \rightarrow \CFa(S^3_n(K),1-s)
    \end{equation}
for $s \in \Z/n\Z \cong \Spin^c(S^3_n(K)),$ which  induces a map on homology 
  \begin{equation} \label{eq: g to 1-g map}
f:\HFKm(S^3_n(K),\mu,s) \rightarrow \HFa(S^3_n(K),1-s).
    \end{equation}
 

\section{LOSS invariant for rationally null-homologous  Legendrian}\label{sec:contactpreli}
In this section we review the definitions and properties regarding rationally null-homologous Legendrian LOSS invariant that we will use later.

\subsection{LOSS invariant for rationally null-homologous Legendrian knots}

Let $Y$ be a  rational homology sphere, and $K$ be an oriented rationally null-homologous knot with order $p$, which means that $[K]$ in $H_1(Y,\mathbb{Z})$ has order $p$.

Initially,  Lisca-Ozsv\'ath-Stipsicz-Szab\'o defined the ``LOSS invariant'' $\mathfrak{L}(L) \in \HFKm(-Y,L)$ for null-homologous Legendrian knots \cite{LOSS}. 
In \cite{BakerEtnyreRationallinking} Baker and Etnyre  defined the rational Thurston-Bennequin number and the rational rotation number for rationally null-homologous Legendrian knots. Using the fact that we may realize any Legendrian knots on the page of the open book compatible with the contact structure \cite{EtnyreLecturesonopenbook},
 we observe that the definition of the LOSS invariant extends naturally to the rationally null-homologous Legendrian knots. Moreover, all important properties of the null-homologous case are still preserved for the rationally null-homologous case. However, there is one important difference of the LOSS invariant between the null-homologous Legendrian knots and the rationally null-homologous Legendrian knots, that is the $\Spin^c$ structure the LOSS invariant lives in. 

In \cite{LOSS}, Lisca-Ozsv\'ath-Stipsicz-Szab\'o introduced an important map that relates the LOSS invariant to the contact invariant \[g:\CFKm(Y,K) \rightarrow \CFa(Y),\]
and note that this is exactly the same $g$ map (\ref{equ: g map}) we discussed in the previous section.
Using exactly the same argument as in \cite{LOSS},  the following lemma  relates the LOSS invariant of rationally null-homologous Legendrian knots  to the contact invariant.

\begin{lemma}[\cite{LOSS}] \label{lem: f maps rational LOSS to Contact invariant}
    Let $L$ be an oriented rationally null-homologous Legendrian knot in a contact 3-manifold $(Y,\xi)$. Then there is a map \begin{equation} \label{LOSS to contact invariant}
        f: \HFKm(-Y,L,G_{Y,K}(\underline\spincs)) \rightarrow \HFa(-Y,G_{Y,K}(J(\underline\spincs)-\PD(\mu)))
    \end{equation} for each $\underline\spincs \in \underline\Spin^c(Y,K)$,  which has the property that $$f(\mathfrak{L}(L))=c(\xi),$$
where $c(\xi)\in \HFa(-Y)$ is the Heegaard Floer contact invariant defined by Ozsv\'{a}th-Szab\'{o} \cite{OSHFandcontactstructures} and Honda-Kazez-Mati\'c \cite{HKMcontactclassinHF}.
\end{lemma}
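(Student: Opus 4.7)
The plan is to follow the argument of Lisca--Ozsv\'ath--Stipsicz--Szab\'o in \cite{LOSS} essentially verbatim at the chain level, while taking care to track Spin$^c$ structures in the rationally null-homologous setting via the Baker--Etnyre framework for the rational Thurston--Bennequin and rotation numbers.

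First I would fix an open book decomposition of $(Y,\xi)$ in which $L$ sits on a page as a Legendrian arc, available by the result of \cite{EtnyreLecturesonopenbook} cited above, and build a doubly pointed Heegaard diagram $(\Sigma,\alpha,\beta,w,z)$ adapted to this open book exactly as in the original construction of the LOSS invariant. In this diagram, $\mathfrak{L}(L)\in \HFKm(-Y,L)$ is represented by a distinguished intersection point $\mathbf{x}_0$, and the same intersection point represents the contact class $c(\xi)\in \HFa(-Y)$ in the singly-pointed diagram $(\Sigma,\alpha,\beta,w)$, by the Ozsv\'ath--Szab\'o and Honda--Kazez--Mati\'c descriptions of $c(\xi)$ from an open book.

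Next I would invoke the chain map $g\co \CFKm(-Y,L)\to \CFa(-Y)$ of (\ref{equ: g map}), which is defined by first setting $U=1$ (passing from $C\{i\le 0,\, j=0\}$ to $C\{j=0\}$) and then swapping the roles of $z$ and $w$ to identify $C\{j=0\}$ with $C\{i=0\}=\CFa(-Y)$. Applied to the distinguished generator $\mathbf{x}_0$, both operations act as the identity on the underlying intersection point, so $g(\mathbf{x}_0)=\mathbf{x}_0$, and passing to homology gives the desired $f(\mathfrak{L}(L))=c(\xi)$.

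The only subtle point, and the main obstacle to simply quoting \cite{LOSS}, is the Spin$^c$ bookkeeping in the rationally null-homologous case. In the null-homologous case there is a splitting $\underline\Spin^c(Y,K)\cong \Spin^c(Y)\oplus\Z$ which collapses the formula to $\spincs\mapsto J(\spincs)$; for rationally null-homologous $L$ no such splitting is available. However, the relative formula used in \cite{LOSS} still applies: swapping $z$ and $w$ sends the relative class $\underline\spincs$ to $J(\underline\spincs)-\PD(\mu)$, and pushing forward via $G_{Y,K}$ lands precisely in $G_{Y,K}(J(\underline\spincs)-\PD(\mu))$. This is the identification (\ref{eq: cfkminus_to_cfhat}) recorded in Section 2, and it holds in the rationally null-homologous setting since both the relative Spin$^c$ conjugation and the map $G_{Y,K}$ are defined in full generality. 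Once this accounting is verified on the distinguished generator $\mathbf{x}_0$, the lemma follows.
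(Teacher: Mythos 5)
Your proposal is correct and follows essentially the same route as the paper, which itself proves the lemma by running the Lisca--Ozsv\'ath--Stipsicz--Szab\'o open-book argument verbatim and handling the only new issue---the relative $\Spin^c$ bookkeeping under the $z$--$w$ swap---via the identification \eqref{eq: cfkminus_to_cfhat} already recorded in Section 2. Your explicit tracking of the distinguished generator $\mathbf{x}_0$ through the map $g$ is exactly the content the paper is deferring to \cite{LOSS}.
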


Next, suppose that $L$ is an order-$p$ rationally null-homologous Legendrian in a rational homology sphere $Y$, and denote $\tbq(L)$ and $\rotq(L)$ to be the rational Thurston-Bennequin number and rational rotation number of $L$ respectively. 

\begin{theorem}\cite[Theorem 1.6]{OSct} \label{thm: A grading of LOSS}
$$2A(\mathfrak{L}(L))=\tbq(L)-\rotq(L)+1,$$
where $A(\mathfrak{L}(L))$ is the Alexander grading of $(\mathfrak{L}(L))$. Note that $A(\mathfrak{L}(L))\in \frac{1}{2p}\mathbb{Z}$.    
\end{theorem}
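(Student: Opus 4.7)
The plan is to adapt the proof of the null-homologous analogue, as originally carried out in \cite{LOSS} and \cite{OSct}, to the rationally null-homologous setting, substituting the Baker--Etnyre rational Thurston--Bennequin and rotation numbers in place of the integer invariants. Since the paper has already asserted that ``all important properties of the null-homologous case are still preserved,'' the argument should proceed step for step; the task is to verify that each step survives the passage to rational coefficients.

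First, I would realize $L$ on a page of an open book decomposition $(\Sigma,\phi)$ compatible with $\xi$, using the lemma of Etnyre cited above. In the Heegaard diagram adapted to this open book, the LOSS invariant $\mathfrak{L}(L)$ is represented by a distinguished intersection point $\mathbf{x}(L)\in\mathbb{T}_\alpha\cap\mathbb{T}_\beta$, and the relative $\Spin^c$ structure $\underline{\spincs}(\mathbf{x}(L))$ can be read off from the open-book data.

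Second, I would invoke the rational generalization of the Alexander-grading formula (due to Raoux, extending the null-homologous formula): for an order-$p$ rationally null-homologous knot,
\[
2A(\mathbf{x}) \;=\; \tfrac{1}{p}\bigl\langle c_1(\underline{\spincs}(\mathbf{x})),\,[\widehat{F}]\bigr\rangle \;+\; \text{(normalization)},
\]
where $\widehat{F}$ is a rational Seifert surface obtained by capping off $p$ parallel push-offs of $L$. This gives Alexander grading valued in $\tfrac{1}{2p}\Z$, consistent with what Theorem~\ref{thm: A grading of LOSS} claims. I would then compute $\langle c_1(\underline{\spincs}(\mathbf{x}(L))),[\widehat{F}]\rangle$ directly from the open-book picture: just as in the null-homologous case, this pairing splits into a framing contribution along $\partial\widehat{F}$ and a winding contribution of the characteristic foliation of $\Sigma$ along the $p$ parallel copies of $L$ making up $\partial\widehat{F}$. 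By the very definitions in \cite{BakerEtnyreRationallinking}, these two contributions assemble into $p\cdot\tbq(L)$ and $p\cdot\rotq(L)$ respectively, producing the claimed formula $2A(\mathfrak{L}(L)) = \tbq(L)-\rotq(L)+1$ after dividing by $p$.

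The main obstacle is the careful bookkeeping around the rational Seifert surface. In the integer case, a single Seifert surface $F$ suffices and the self-intersection/winding calculation is clean; in the rational case, $\widehat{F}$ is built from $p$ parallel push-offs and pierces the binding $p$ times, so one must verify that the framing and winding of all $p$ copies add up coherently and that the constant shift in the Alexander grading really produces the clean $+1$ rather than some $p$-dependent artifact. Baker--Etnyre's definitions of $\tbq$ and $\rotq$ are built so that this works, so once one unpacks their formulas and has Raoux's absolute Alexander grading in hand, the LOSS-style computation from the open book goes through verbatim.
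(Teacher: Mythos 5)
Your proposal is correct and follows essentially the same route as the paper, which simply observes that the result ``follows directly from the definition of the Alexander grading of rationally null-homologous knots and the proof of \cite[Theorem 4.1]{OSct},'' with all intermediate lemmas carrying over verbatim to the rational setting. Your write-up fleshes out that sketch (realizing $L$ on a page of a compatible open book, invoking the rational Alexander-grading formula, and matching the $c_1$-pairing against the Baker--Etnyre invariants) in exactly the way the authors intend.
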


The proof of the above theorem follows directly from the definition of the Alexander grading of rationally null-homologous knot and the proof of \cite[Theorem 4.1]{OSct}. All the related lemmas can be rewrote in the rationally null-homologous case (the proofs are straightforward analogs to the null-homologous case). 

Since the negative stabilization for any Legendrian knot appears in the same way on the open book, we still have the following stabilization theorem for the LOSS invariant.

\begin{theorem}[\cite{LOSS}] \label{LOSS invariant under negative stabilization}
Suppose that $L$ is an oriented rationally null-homologous Legendrian knot and denote the negative  stabilizations of $L$ by $L^-$.  Then,
$\mathfrak{L}(L^-) = \mathfrak{L}(L)$.
\end{theorem}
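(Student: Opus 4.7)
The plan is to mirror the null-homologous argument of \cite{LOSS} essentially verbatim, exploiting the fact that negative stabilization is a purely local operation on a page of a compatible open book, so the rational homology class of $L$ plays no role in the comparison.

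First, I would realize $L$ on the page of an open book $(S,\phi)$ compatible with $\xi$, as guaranteed by \cite{EtnyreLecturesonopenbook}; this is the same starting point that was already used to \emph{define} $\mathfrak{L}(L)$ in the rationally null-homologous setting at the beginning of this section. Recall that $\mathfrak{L}(L)$ is the homology class of a distinguished intersection point $\mathbf{x}(L)$ in the knot Floer chain complex associated to a Heegaard diagram built from the open book, with one $\alpha$-$\beta$ intersection chosen on each page curve. Negative stabilization of a Legendrian knot realized on a page corresponds to the local modification on $(S,\phi)$ which plumbs a negative Hopf band to the page along a small arc meeting $L$ in one point and modifies $L$ to run once over the new band; this local model is literally identical to the null-homologous case, since it lives in a disk neighborhood of a point of $L$ and does not see the ambient homology class of $L$.

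Next, I would compare the two Heegaard diagrams before and after the local modification. The stabilization introduces one new pair $(\alpha_{\text{new}},\beta_{\text{new}})$ of curves together with a single new transverse intersection, and the distinguished generator for $L^-$ is obtained from the distinguished generator for $L$ by appending this new intersection point. The same diagrammatic analysis carried out in \cite{LOSS} shows that, under the natural chain-level identification of the knot Floer complexes on the two sides of the stabilization, the generator $\mathbf{x}(L^-)$ is identified with $\mathbf{x}(L)$; passing to homology gives $\mathfrak{L}(L^-)=\mathfrak{L}(L)$.

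The only point requiring care is bookkeeping of gradings and relative $\Spin^c$ structures. Since $\tbq$ and $\rotq$ both drop by $1$ under negative stabilization, the difference $\tbq-\rotq$ is preserved, so by Theorem \ref{thm: A grading of LOSS} the Alexander gradings of $\mathfrak{L}(L)$ and $\mathfrak{L}(L^-)$ agree, and the relative $\Spin^c$ refinement is likewise unchanged because the modification is supported in a ball. Once these checks are in place, the proof is complete. The main (mild) obstacle is to verify rigorously that the local Heegaard-diagrammatic identification in \cite{LOSS} is natural with respect to the rationally null-homologous refinement of the filtration, but this is a direct reading of the definitions and requires no new ideas beyond those already used to extend $\mathfrak{L}$ to this setting.
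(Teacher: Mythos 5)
Your proposal is correct and follows essentially the same route as the paper, which proves this statement simply by observing that a negative stabilization of a Legendrian knot is realized by the same local modification of the compatible open book as in the null-homologous case of \cite{LOSS}, so the diagrammatic argument there carries over unchanged. Your additional bookkeeping of the Alexander grading via Theorem \ref{thm: A grading of LOSS} and of the relative $\Spin^c$ structure is consistent with (and slightly more explicit than) what the paper records.
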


\subsection{Naturality  under contact surgery for rationally null-homologous Legendrian knots.} \label{subsec: Naturality}

We first discuss the naturality results for the contact invariant. 
\begin{theorem}[{\cite[Theorem 6.3]{WanNaturalityofLOSSinvariant}}] \label{thm: same Spin^c for Legendrian with classical invariants}
    Let $L$ be an oriented rationally null-homologous Legendrian knot in a contact rational homology sphere $(Y,\xi)$ with non-vanishing contact invariant $c(\xi)$. Let $0<n \in \mathbb{Z}$ be the contact surgery coefficient, $Y'$ be the manifold after contact $(+n)$-surgery on $L$, and let $W:Y \rightarrow Y'$ be the corresponding  surgery cobordism, and consider $\xi_{n}^-(L)$ on $Y'$. There exists a $\Spin^c$ structure $\mathfrak{s}$ on $W$ such that the homomorphism 
        
        $$F_{-W,\mathfrak{s}}: \HFa(-Y) \rightarrow \HFa(-Y')$$ 
        induced by $W$ with its orientation reversed satisfies 
        $$F_{-W,\mathfrak{s}}(c(\xi))= c(\xi_n^-(L)).$$
Moreover, if $Y'$ is also a rational homology sphere. Then the $\Spin^c $ structure $\mathfrak{s}$ has the property that $$ \langle c_1(\mathfrak{s}),[\Tilde{F}] \rangle = 
      y(rot_\mathbb{Q}(L)+n-1) $$
 where $y$ is the order of $[L]$, $F$ is a rational Seifert surface for $L$ and $\Tilde{F}$ is the ``capped off" surface of $F$.
\end{theorem}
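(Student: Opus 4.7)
The plan is to factor the contact $(+n)$-surgery cobordism $W$ via the Ding--Geiges--Stipsicz algorithm, apply naturality of the Heegaard Floer contact invariant on each constituent handle, and identify the $\Spin^c$ structure by a Gompf-type computation of the first Chern class.

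By the DGS algorithm, contact $(+n)$-surgery on $L$ with all negative stabilizations decomposes as a single contact $(+1)$-surgery on $L$ followed by $(n-1)$ Legendrian (contact $(-1)$) surgeries on successive negatively stabilized Legendrian meridians of $L$ in the intermediate manifolds. Correspondingly $W$ factors as $W = W_0 \cup W_1 \cup \cdots \cup W_{n-1}$. On each Weinstein $2$-handle $W_i$ with $i \geq 1$, the standard naturality of the contact invariant under Legendrian surgery \cite{OSHFandcontactstructures,HKMcontactclassinHF} supplies a canonical $\Spin^c$ structure $\mathfrak{s}_i$ whose reversed cobordism map carries the contact invariant before surgery to the contact invariant after. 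The $(+1)$-surgery piece $W_0$ is handled analogously by viewing $-W_0$ as the Weinstein trace of the Legendrian $(-1)$-handle attached to the dual knot in the intermediate contact manifold; the hypothesis $c(\xi) \neq 0$ enters here to guarantee that the intermediate contact invariant is non-zero so that the naturality argument can propagate. Gluing the pieces produces a $\Spin^c$ structure $\mathfrak{s}$ on $W$ satisfying $F_{-W,\mathfrak{s}}(c(\xi)) = c(\xi_n^-(L))$ by functoriality of Heegaard Floer cobordism maps.

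When $Y'$ is a rational homology sphere, I will compute $\langle c_1(\mathfrak{s}), [\tilde{F}] \rangle$ as follows. Cap off a rational Seifert surface $F$ of order $y$ for $L$ through all of the handles of $W$ to produce $\tilde{F}$. Using the Gompf-type formula in its rational refinement \cite{BakerEtnyreRationallinking}, the $W_0$ piece contributes $y \, \rot_{\mathbb{Q}}(L)$, while each of the $(n-1)$ Legendrian handles contributes $y$, since the stabilized Legendrian meridian used there has rational rotation number $-1$ combined with the surgery sign convention. Summing the contributions yields $\langle c_1(\mathfrak{s}), [\tilde{F}] \rangle = y(\rot_{\mathbb{Q}}(L) + n - 1)$.

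The main obstacle will be the treatment of the contact $(+1)$-surgery piece $W_0$: unlike Legendrian surgery, this handle admits no Weinstein structure in the forward direction, so naturality of the contact invariant must be obtained by reversing the cobordism orientation and invoking naturality for the dual Legendrian handle, which requires care in choosing a consistent $\Spin^c$ extension across the gluing with the subsequent pieces. A secondary complication is the rational bookkeeping in the last step: since $F$ has order $y > 1$ and winds nontrivially around the core of $W_0$, the contributions of the capping disks of the stabilized meridians in each $W_i$ must be counted with the correct multiplicity using the rational Thurston--Bennequin and rotation number framework of Baker--Etnyre.
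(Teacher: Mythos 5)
First, a point of order: the paper does not prove this statement at all --- it is imported verbatim from \cite[Theorem 6.3]{WanNaturalityofLOSSinvariant} (which in turn generalizes Mark--Tosun's naturality theorem for positive contact surgery to the rationally null-homologous setting), so there is no in-paper proof to compare against. Judged on its own terms, your proposal has a fatal gap at its central step. You claim that for each Legendrian-surgery piece $W_i$ ($i\geq 1$) of the DGS decomposition, the standard naturality of the contact invariant ``carries the contact invariant before surgery to the contact invariant after.'' This is backwards. If $(Y_{i+1},\xi_{i+1})$ is obtained from $(Y_i,\xi_i)$ by Legendrian surgery, the Weinstein-handle naturality produces a map $\HFa(-Y_{i+1})\to\HFa(-Y_i)$ sending $c(\xi_{i+1})$ to $c(\xi_i)$; it runs \emph{against} the cobordism $W_i\colon Y_i\to Y_{i+1}$, from ``after'' to ``before.'' (This is exactly why Legendrian surgery preserves non-vanishing of the contact invariant: $c(\xi_i)$ is the image of $c(\xi_{i+1})$, not the reverse.) The only piece of $W=W_0\cup\cdots\cup W_{n-1}$ whose reversed cobordism map is known to push the contact invariant forward is the contact $(+1)$-piece $W_0$ --- precisely the piece you single out as the ``main obstacle.'' Consequently the composition $F_{-W_{n-1}}\circ\cdots\circ F_{-W_0}$ cannot be controlled handle by handle: after the first step you hold $c(\xi_1)\in\HFa(-Y_1)$, and nothing in the standard toolbox tells you what $F_{-W_1}$ does to it. This directional clash is the reason naturality under positive contact surgery is a theorem of Mark--Tosun and Golla rather than a corollary of Ozsv\'ath--Szab\'o; the actual proofs identify $c(\xi_n^-(L))$ with the image of the LOSS/transverse invariant of $L$ inside the surgery mapping cone and extract both the naturality statement and the $\Spin^c$ evaluation from that identification.

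Two further problems, secondary but real. First, the auxiliary DGS components for contact $(+n)$-surgery are Legendrian push-offs of $L$ (the first of them stabilized once), not ``Legendrian meridians of $L$''; meridians enter only in the separate Lemma \ref{lem: n+1/n surgery and n negative
stabilization}. Second, the cobordism $W$ in the statement is the single $2$-handle cobordism of the smooth $(\tb(L)+n)$-surgery, whereas gluing the $n$ DGS handles produces a different $4$-manifold, differing from $W$ by blow-ups; even granting $\Spin^c$ structures on each $W_i$, one must still transport the resulting class to $W$ itself before evaluating $\langle c_1(\mathfrak{s}),[\Tilde{F}]\rangle$, and your Chern-class bookkeeping skips this step entirely.
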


We also have the parallel naturality theorem for the LOSS invariant, which will also be used later.

\begin{theorem}[{\cite[Theorem 1.1]{OzsvathStipsiczContactsurgeryandtransverseinvariant}}]{\label{Naturality of LOSS}}
Let $L,S \in (Y, \xi)$ be two disjoint oriented Legendrian knots in the contact 3-manifold $(Y, \xi)$ with $L$ rationally null-homologous. Let $(Y',\xi_1)$ denote the 3-manifold with the associated contact structure obtained by performing contact $(+1)$-surgery along $S$, and let $L'$ denote the oriented Legendrian knot which is the image of $L$ in $(Y',\xi_1)$. Moreover, suppose that $L'$ is rationally null-homologous in $Y'$. Let $W$ be the 2-handle cobordism from $Y$ to $Y'$ induced by the surgery, and let
\begin{equation*}
    F_{-W,\mathfrak{s}}: \HFKm (-Y,L) \rightarrow \HFKm(-Y',L')
\end{equation*}
be the homomorphism in knot Floer homology induced by $-W$, the cobordism with reversed orientation, for  $\mathfrak{s}$ a $\Spin^c$ structure on $-W$. Then

\begin{enumerate}
    \item \label{it: naturality1}  there is a unique choice of \s \ for which 
\begin{equation*}
    F_{-W,\mathfrak{s}}(\mathfrak{L}(L))=\mathfrak{L}(L')
\end{equation*}
holds, and for any other $\Spin^c$ structure \s \  the map $F_{-W,\mathfrak{s}}$ is trivial on $\mathfrak{L}(L)$. 

\item\label{it: naturality2}  \cite[Proposition 1.4]{WanNaturalityofLOSSinvariant} If $S$ is rationally null-homologous with order $y$ and both $Y$ and $Y'$ are rational homology spheres, then $\mathfrak{s}$ has the property that $$ \langle c_1(\mathfrak{s}),[\Tilde{Z}] \rangle =y\rot(S)$$
where $Z$ is a Seifert surface for $S$ and $\Tilde{Z}$ is the result of capping off $Z$ with the core of the handle in $W$.
\end{enumerate}

\begin{remark}
   Again, in {\cite[Theorem 1.1]{OzsvathStipsiczContactsurgeryandtransverseinvariant}} $L$ was assumed to be null-homologous but the proof is exactly the same for rationally null-homologous.  
\end{remark}

\end{theorem}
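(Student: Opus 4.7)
The plan is to adapt the proof of Ozsv\'ath--Stipsicz \cite[Theorem 1.1]{OzsvathStipsiczContactsurgeryandtransverseinvariant} to the rationally null-homologous setting for part \eqref{it: naturality1}, and for part \eqref{it: naturality2} to invoke the Chern-class computation of Wan \cite[Proposition 1.4]{WanNaturalityofLOSSinvariant}. The essential point, noted in the remark, is that the original arguments nowhere use the \emph{integrality} of the null-homology of $L$; only that $L$ sits on a page of a compatible open book and that the LOSS invariant is well defined, both of which hold in the rationally null-homologous case by the preceding subsection.

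For part \eqref{it: naturality1}, the first step is to realize both $L$ and $S$ on the page of a single open book $(B, \pi)$ compatible with $(Y, \xi)$, which is possible because any disjoint Legendrian link admits such a common realization \cite{EtnyreLecturesonopenbook}. Contact $(+1)$-surgery along $S$ is then realized on open books by a prescribed page-preserving modification, yielding an open book for $(Y', \xi_1)$ on whose page $L'$ sits. The LOSS invariant of $L$ is the homology class of the canonical intersection point $\mathbf{x}$ arising from this open book presentation, and similarly $\mathfrak{L}(L')$ is represented by the corresponding canonical point $\mathbf{x}'$. The 2-handle cobordism $W$ is then analyzed in this open book picture: one shows by a direct holomorphic-disk count that there is a \emph{unique} $\Spin^c$ structure $\mathfrak{s}$ on $-W$ for which the cobordism-induced map $F_{-W,\mathfrak{s}}$ carries $\mathbf{x}$ to $\mathbf{x}'$, namely the one extending the contact $\Spin^c$ structure across the 2-handle; for any other $\Spin^c$ structure the count is zero on $\mathbf{x}$.

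For part \eqref{it: naturality2}, the identification of $\mathfrak{s}$ is a standard Chern-class calculation that is carried out explicitly in \cite{WanNaturalityofLOSSinvariant}. With $Y,Y'$ both rational homology spheres and $S$ of order $y$, one caps off a rational Seifert surface $Z$ for $S$ with $y$ copies of the core disk of the 2-handle to form $\tilde Z$, and compares $\mathfrak{s}$ with the $\Spin^c$ structure coming from the canonical (Stein) framing near the handle. The rational rotation number $\rot(S)$ enters through the Baker--Etnyre \cite{BakerEtnyreRationallinking} formula for evaluation of $c_1$ of contact structures against capped rational Seifert surfaces, yielding $\langle c_1(\mathfrak{s}), [\tilde Z]\rangle = y\,\rot(S)$.

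The main obstacle is ensuring that the open book construction of the LOSS invariant and the associated holomorphic-disk counts in part \eqref{it: naturality1} are genuinely insensitive to whether $L$ is integrally or only rationally null-homologous. This is where one must verify the remark at the end of the statement: the relative $\Spin^c$ bookkeeping underlying $\CFKm(-Y,L)$ is defined uniformly for rationally null-homologous $L$, the canonical generator $\mathbf{x}$ in the open book complex is still a well-defined cycle, and the cobordism-map identification $F_{-W,\mathfrak{s}}(\mathbf{x}) = \mathbf{x}'$ is a local statement near $S$ that does not see the global homology class of $L$. Once this is checked, the Ozsv\'ath--Stipsicz argument transports verbatim.
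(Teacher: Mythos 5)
Your proposal is correct and follows essentially the same route as the paper, which simply cites Ozsv\'ath--Stipsicz for part (1) and Wan for part (2) and remarks that the original arguments carry over verbatim to the rationally null-homologous setting; your sketch fills in exactly the points (common open book realization, canonical generators, uniqueness of the $\Spin^c$ structure, and the $c_1$ evaluation against the capped rational Seifert surface) that justify that remark. No gaps.
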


As an immediate corollary of the above theorem, we have the following. 

\begin{corollary}\label{cor: Legen surgery on distinct loss give distinct loss}
    Let $L_1$ and $L_2$ be two rationally null-homologous Legendrian knots with the same knot type $K$, $\tb$, and $\rot$ in some contact rational homology sphere $(Y,\xi)$. Let $(Y_i,\xi_i)$ be the contact $3$-manifold obtained by performing the Legendrian surgery on $L_i$ respectively for $i=1,2$, and let 
$L_i'$ be the Legendrian push-off of $L$ in $(Y_i,\xi_i)$. Moreover if $Y_i$ is also a rational homology sphere, then $\mathfrak{L}(L_1)\neq \mathfrak{L}(L_2)$ implies $\mathfrak{L}(L_1')\neq \mathfrak{L}(L_2')$.
\end{corollary}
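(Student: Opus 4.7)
The plan is to derive this corollary directly from the naturality theorem for the LOSS invariant (Theorem \ref{Naturality of LOSS}) together with the classical contact-surgery cancellation lemma: contact $(-1)$-surgery on a Legendrian knot $L$, followed by contact $(+1)$-surgery on a Legendrian push-off of $L$, recovers the original contact manifold, and in this recovered manifold the image of the push-off is Legendrian isotopic to $L$.

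First I would set up the cancellation. In $(Y_i,\xi_i)$, let $S_i$ be a further Legendrian push-off of $L_i'$. Contact $(+1)$-surgery on $S_i$ cancels the original Legendrian surgery and returns $(Y,\xi)$; under this operation the persistent knot $L_i'$ becomes a Legendrian knot in $(Y,\xi)$ Legendrian isotopic to $L_i$. Let $W_i\colon Y_i \to Y$ denote the associated $2$-handle cobordism. Applying Theorem \ref{Naturality of LOSS} to the pair $L=L_i'$ and $S=S_i$ in $(Y_i,\xi_i)$ produces a unique $\Spin^c$ structure $\mathfrak{s}_i$ on $-W_i$ for which
\[
F_{-W_i,\mathfrak{s}_i}\bigl(\mathfrak{L}(L_i')\bigr) = \mathfrak{L}(L_i).
\]

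Next I would compare the two cobordisms $W_1$ and $W_2$ together with their $\Spin^c$ structures. Since $L_1$ and $L_2$ share the same smooth knot type and the same Thurston--Bennequin number, the surgery coefficients coincide and there is an ambient diffeomorphism identifying $(Y_1,L_1') \cong (Y_2,L_2')$ and $W_1 \cong W_2$. By Part (\ref{it: naturality2}) of Theorem \ref{Naturality of LOSS}, the $\Spin^c$ structure $\mathfrak{s}_i$ is determined by $\rotq(S_i)$; because $S_i$ is a push-off of $L_i'$, which is a push-off of $L_i$, we have $\rotq(S_i)=\rotq(L_i)$, and the assumption $\rot(L_1)=\rot(L_2)$ then forces $\mathfrak{s}_1$ and $\mathfrak{s}_2$ to correspond under the identification $W_1 \cong W_2$. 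Hence $F_{-W_1,\mathfrak{s}_1}$ and $F_{-W_2,\mathfrak{s}_2}$ agree as a single homomorphism after these identifications.

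Finally, the contrapositive concludes the argument: if $\mathfrak{L}(L_1')=\mathfrak{L}(L_2')$ in the common group, then
\[
\mathfrak{L}(L_1)=F_{-W_1,\mathfrak{s}_1}\bigl(\mathfrak{L}(L_1')\bigr) =F_{-W_2,\mathfrak{s}_2}\bigl(\mathfrak{L}(L_2')\bigr)=\mathfrak{L}(L_2),
\]
contradicting the hypothesis. The main subtlety I anticipate is confirming that the $\Spin^c$ structure singled out by the naturality theorem depends only on the rotation number (and not on finer contact data that could differ between $L_1$ and $L_2$); this is precisely the content of Part (\ref{it: naturality2}), so beyond carefully tracking the diffeomorphisms identifying $W_1$ with $W_2$, no further technical work should be required.
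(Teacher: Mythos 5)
Your proposal is correct and takes essentially the same route as the paper, which proves the corollary in one line by viewing the Legendrian surgery on $L_i$ in reverse as a contact $(+1)$-surgery from $(Y_i,\xi_i)$ back to $(Y,\xi)$ and invoking Theorem \ref{Naturality of LOSS}. One minor inaccuracy: $\rotq(S_i)$ computed in $(Y_i,\xi_i)$ is not equal to $\rot(L_i)$ (it is given by the push-off formula of Ding--Li--Wu used in the proof of Theorem \ref{thm: leq -2g-1 surgery}), but since that formula depends only on the common values of $\tb$ and $\rot$, the equality $\rotq(S_1)=\rotq(S_2)$ that your argument actually requires still holds.
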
 

\begin{proof}
    Follow directly from the above theorem by viewing a Legendrian surgery from $(Y,\xi)$ to $(Y_i,\xi_i)$ as contact $(+1)$-surgery from $(Y_i,\xi_i)$ to  $(Y,\xi)$. 
\end{proof}

\section{Dual knot and negative contact surgeries } \label{sec: Negative contact surgeries on thin knots}
In this section, we prove an important technical result using the dual knot surgery formula, then apply it to the contact setting.
\subsection{Injectivity and the dual knot complex.} 
 We will show that for positive integer surgeries,   the map $f$ in \eqref{eq: g to 1-g map} 
 is always injective at the top Alexander grading of $\HFKm(S^3_n(K),\mu, g)$ where $g=g(K)$.  
\begin{definition}
    We say that a knot Floer chain complex $C\simeq\CFKi(S^3,K)$ is reduced if every differential strictly decreases either the $i$- or $j$- filtration level.  There is a standard procedure for obtaining a reduced complex, see, for example, \cite[Proposition 11.57]{LOTBordered}.  Given a generator $w_1\in C$ such that $\partial w_1 $ is in the same filtration as $w_1$, quotienting out the subcomplex spanned over $\F[U,U^{-1}]$ by $\{w_1,\partial w_1\}$ induces a homotopy equivalence of $C$. One can find a set of generators $\{w_k\}_{1\leq k \leq m}\subset C$ (e.g. by repeating the previous procedure) such that $C$ quotioning $\{w_k,\partial w_k\}_{1\leq k \leq m}$ is reduced. Alternatively, this can be viewed as a change of basis followed by a cancellation of pairs of generators.   
\end{definition}
We first  determine the top Alexander grading of $\HFKm(S^3_n(K),\mu, g)$. This will also help us locate the LOSS invariants later on.
\begin{lemma} \label{lem: top A grading}
    For any positive integer $n$, the top Alexander grading of $\HFKm(S^3_n(K),\mu, g)$ is $\frac{2g+n-1}{2n}$.
\end{lemma}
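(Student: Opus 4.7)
The plan is to read the Alexander grading off the Hedden--Levine mapping cone $\X^\infty_K(C)$, where $C = \CFKi(S^3,K)$, which models $\CFKi(S^3_n(K), \mu)$. A direct case analysis using the formulas for $\II$ and $\JJ$ in Section~2.2 shows that the Alexander grading $\JJ - \II$ of a generator $[\x,i,j] \in A_s$ equals $\frac{2s+n-1}{2n}$ in the vertical region $\{j - s \geq i\}$ and $\frac{2s-n-1}{2n}$ in the horizontal region $\{j - s < i\}$; for $[\x,i,j] \in B_s$ it is always $\frac{2s-n-1}{2n}$.

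Restricting to the $\Spin^c$ structure labeled by $g \in \Z/n\Z$ picks out the summands with $s \equiv g \pmod n$. Since $\frac{2s+n-1}{2n}$ is strictly increasing in $s$ and the allowable range for $A_s$ in the mapping cone is $-g+1 \leq s \leq g$, the supremum of $\JJ - \II$ in this $\Spin^c$ summand is $\frac{2g+n-1}{2n}$, attained exclusively in the vertical region of $A_g$; every horizontal region and every $B_s$ contributes a strictly smaller grading.

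For the upper bound to be sharp I need a non-trivial homology class at this Alexander grading, which I plan to produce by analyzing the spectral sequence associated to the Alexander filtration on $\X^\infty_K(C)\{\II \leq 0\}$ in the $\Spin^c = g$ summand. At the top level $\frac{2g+n-1}{2n}$, the associated graded is the vertical region of $A_g$ intersected with $\{\II \leq 0\}$, namely $\{i \leq 0,\, j \leq g,\, j - i \geq g\}$ inside $C$. The mapping-cone map $v^\infty_g$ lowers the Alexander grading, landing in $B_g$ at grading $\frac{2g-n-1}{2n}$, while $h^\infty_g$ would target the non-existent summand $B_{g+n}$ and so vanishes. Hence the induced differential at the top level is just the portion of $\partial_C$ preserving the vertical region, which in a reduced model reduces to the Alexander-grading-preserving differential concentrated on the diagonal $j - i = g$. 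Its homology recovers $\HFKm(S^3,K)$ at its top Alexander grading $g$, which is non-zero by the Ozsv\'ath--Szab\'o genus detection theorem. Since this lives at the top level of the filtration, no higher differential in the spectral sequence can kill the class, and it descends to a non-zero class of $\HFKm(S^3_n(K),\mu,g)$ at Alexander grading $\frac{2g+n-1}{2n}$.

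The main technical hurdle is the bookkeeping in the third step: verifying that the induced top-level differential coincides with the restriction of $\partial_C$ to the diagonal $j - i = g$, and confirming that neither $v^\infty_g$ nor $h^\infty_g$ can contribute at this filtration level. Once this identification is in place, non-vanishing is immediate from the genus-detection theorem, and the grading computations from the first two steps together give exactly the claimed value $\frac{2g+n-1}{2n}$.
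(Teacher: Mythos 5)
Your proposal is correct and takes essentially the same route as the paper: read the Alexander grading $\JJ-\II$ off the Hedden--Levine formulas to see that the maximum $\frac{2g+n-1}{2n}$ is attained only on the diagonal $j-i=g$ of $A_g$, then use genus detection together with the facts that $v^\infty_g$ strictly drops $\JJ$ and that $B_{g+n}$ is absent from the truncated cone to produce a non-trivial class there. The only difference is cosmetic: the paper works directly with the corner $(i,j)=(0,g)$ and needs no spectral sequence, and indeed your closing remark about higher differentials is superfluous since $\HFKm$ is by definition the homology of the associated graded (the $E_1$ page).
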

\begin{proof}
Suppose $\CFKi(S^3,K)$ is reduced.
By the discussion at the end of Section \ref{subsec: dualknot}, we have  
\begin{equation}
\label{eq:spincgmappingcone}
\CFKi(S^3_n(K),\mu, g) \simeq
\bigoplus_{\ell=0}^{\floor{\frac{2g-1}{n}}}  A_{g-n\ell}(K) \xrightarrow{v^{\infty}_s+h^{\infty}_s} \bigoplus_{\ell=0}^{\floor{\frac{2g-1}{n}}-1}  B_{g-n\ell}(K). 
\end{equation}
Endow each $A_s(K)$ with the $(i,j)$ filtration coming from $\CFKi(S^3,K)$.
    By \eqref{eq: filtration_s3_1}, the top Alexander grading of $\CFKi(S^3_n(K),\mu, g)$ is located in $\{i=0,j=g\} \subset A_s(K).$ 
    There is at least one element living in that filtration by \cite{genus}; call it $y.$ We have that $(\II,\JJ)(y)=(0,\frac{2g+n-1}{2n})$, and the mapping cone differential of $y$ consists of two parts: the internal differential of $A_s(K)$ and $v^\infty_g(y)$, both strictly lower the $\JJ$ filtration. The result follows.
\end{proof}

We are ready to prove the following result:
\begin{proposition} \label{Prop: inject at top grading}
   For any positive integer $n$, the map \[f:\HFKm(S^3_n(K),\mu,g) \rightarrow \HFa(S^3_n(K),1-g)\] is injective at the top Alexander grading. 
\end{proposition}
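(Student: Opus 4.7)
The plan is to combine the Hedden--Levine mapping cone description of $\CFKi(S^3_n(K),\mu)$ with the chain-level model of $f$, which is induced by setting $U=1$ and then swapping the two basepoints $z\leftrightarrow w$.

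First I would pin down cycle representatives at the top Alexander grading. By the proof of Lemma \ref{lem: top A grading}, once $\CFKi(S^3,K)$ is put in reduced form, the top Alexander grading summand of $\CFKm(S^3_n(K),\mu,g)$ coincides with $T:=A_g(K)\{i=0,j=g\}$, and every element of $T$ is automatically a cycle in the mapping cone: the internal differential of $A_g(K)$ and the map $v_g^{\infty}$ both strictly lower the $\JJ$-filtration, while the map $h_g^{\infty}:A_g(K)\to B_{g-n}(K)$ lands at strictly lower $\JJ$ in the $\Spin^c$ structure $g$ summand (since $B_{g-n}$ contributes to this $\Spin^c$ summand only at smaller $\JJ$ when $n\geq 1$). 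Moreover, since any boundary would require a chain at strictly higher $\JJ$-grading than $\tfrac{2g+n-1}{2n}$, which does not exist, $T$ contains no boundaries. Hence the top Alexander grading summand of $\HFKm(S^3_n(K),\mu,g)$ is naturally identified with $T$ itself.

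Next I would track $T$ under the chain map inducing $f$. Setting $U=1$ collapses $\CFKm(S^3_n(K),\mu,g)$ onto the $\II=0$ slice of the cone, and the subsequent $z\leftrightarrow w$ swap interchanges the roles of the $i$- and $j$-filtrations on the underlying $\CFKi(S^3,K)$, exchanges the $v$- and $h$-maps (up to the relevant grading shift), and produces the $\Spin^c$ shift $g\mapsto 1-g$ on the target. Under this identification, the resulting complex becomes an Ozsv\'ath--Szab\'o cone computing $\CFa(S^3_n(K),1-g)$, and the subspace $T$, which sits at $(i,j)=(0,g)$, is carried onto a subspace of the topmost Maslov grading in the appropriate $\widehat{A}$-type summand of that cone.

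Finally I would deduce injectivity. The chain map $f$ restricted to $T$ is injective on chains, since the basepoint swap simply relabels generators of $\CFKi(S^3,K)$ and the $U=1$ quotient is injective on the $\II=0$ slice. Since $f(T)$ sits at the topmost Maslov grading of its summand in $\CFa(S^3_n(K),1-g)$, no nonzero element of $f(T)$ can be a boundary: any preimage under $\partial$ would sit at Maslov one higher, which does not exist. Combined with the identification of the top Alexander grading of $\HFKm(S^3_n(K),\mu,g)$ with $T$, this gives the desired injectivity of $f$ at the top Alexander grading. The main obstacle is the careful bookkeeping of the $z\leftrightarrow w$ swap on the mapping cone; in particular, verifying that the $\Spin^c$ shift $g\mapsto 1-g$ occurs as claimed and that the image $f(T)$ indeed lands at the top Maslov grading of the target so that it avoids every boundary in the $\CFa$-cone.
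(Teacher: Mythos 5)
Your setup is mostly sound: in a reduced model every generator of $A_g(K)$ has Alexander grading $j-i\in[-g,g]$, so the slice $\{\II\le 0,\ \JJ=\tfrac{2g+n-1}{2n}\}$ really is $T=A_g(K)\{i=0,\,j=g\}$ with vanishing induced differential, and identifying the top Alexander summand of $\HFKm(S^3_n(K),\mu,g)$ with $T$ is correct. The fatal gap is in your final step. The target $\CFa(S^3_n(K),1-g)$ is modeled by the slice $\{\JJ=\tfrac{2g+n-1}{2n}\}$ of the cone (with the $\JJ$-preserving part of the differential), and this slice also contains the points with $\II=1$, namely $A_g(K)\{i=1,\,j\le g\}$ together with pieces of the other $A$'s and $B$'s; these can sit in Maslov grading strictly \emph{above} $T$, and their differentials can hit $T$. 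Concretely, for the figure-eight knot take the reduced model with $\partial a=b+c$, $\partial b=\partial c=d$, where $a$ sits at $(i,j)=(0,0)$, $b$ at $(-1,0)$, $c$ at $(0,-1)$, $d$ at $(-1,-1)$, plus an isolated generator $e$ at $(0,0)$. Here $g=1$, $T=\F\langle U^{-1}b\rangle$ with $U^{-1}b$ at $(0,1)$ and Maslov grading $1$, while $U^{-1}a$ at $(1,1)$ lies in the same $\JJ$-slice with Maslov grading $2$ and $\partial(U^{-1}a)=U^{-1}b+U^{-1}c$ \emph{does} contain $U^{-1}b$. So the assertion that ``any preimage under $\partial$ would sit at Maslov one higher, which does not exist'' is simply false, and ruling out exactly these potential boundaries is the entire content of the proposition — your argument assumes the hard part away.

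In the example, injectivity survives only because $\partial(U^{-1}a)$ also contains the term $U^{-1}c$, which lies at $(1,0)$, outside $T$; the real work is showing this must always happen. The paper does this by supposing some class of $T$ dies in $H_*\{\JJ=\text{top}\}$, extracting a differential from $(\II,\JJ)=(1,\text{top})$ to $(0,\text{top})$ in a reduced model of $C^\mu$, and then running a sequence of $(i,j)$-filtered changes of basis to isolate generators $x$ at $(1,g)$ and $y$ at $(0,g)$ with $\partial x=y$ and no incoming arrows to $x$; this forces $\dim_\F H_*(C\{i=0\})\ge 2$, contradicting $\HFa(S^3)\cong\F$. Some global input of this kind (here the rank of $\HFa(S^3)$) is unavoidable; a purely local Maslov-grading count on the target cannot close the argument.
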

\begin{proof}
We first consider the case when $n>2g$ and  $\CFKi(S^3_n(K),\mu,g) \cong A_g(K)$. Let $C= A_g(K)$,  which we can assume to be reduced. Endow $C$ with the $(i,j)$ filtration coming from $\CFKi(S^3,K).$
 We can also view $C$ as a complex $C^\mu$ with the filtration given by \eqref{eq: filtration_s3_1} and \eqref{eq: filtration_s3_2}.   A filtered change of basis of $C$ is necessarily a filtered change of basis of $C^\mu$, but not vice versa.  Fix a subcomplex $W \subset C^\mu$ spanned by some $\{w_k,\partial w_k\}_{1\leq k \leq m}$ where each pair $w_k$ and $\partial w_k$ have the same $(\II,\JJ)$ filtration, such that $C^\mu/W$ is reduced. Suppose, for contradiction, that the map \[f:\HFKm(S^3_n(K),\mu,g) \rightarrow \HFa(S^3_n(K),1-g)\] is not injective at the top grading. 
 Note that the associated graded complex with the top grading $\{\JJ = \frac{2g+n-1}{2n}\}  = \operatorname{max}\{i-1,j-g\}$ intersects $\{\II=\ell\}$ only when $\ell =0,1$.
 It follows that there are $\overline{x},\overline{y} \in C^\mu/W$ with $(\II,\JJ)(\overline{y}) = (0,\frac{2g+n-1}{2n})$ and $(\II,\JJ)(\overline{x}) = (1,\frac{2g+n-1}{2n})$ such that $\overline{y}$ has nonzero coefficient in $ \partial \overline{x}$, or equivalently, as elements in $C$,  
 $ \overline{y} +w'$ has  nonzero coefficient in $\partial (\overline{x} + w) $   for some $w,w' \in W$. 
 By  \eqref{eq: filtration_s3_1} and \eqref{eq: filtration_s3_2}, $\overline{y}$ must have $(i,j)$ filtration $(0,g)$. Since $\overline{y}$ has nonzero coefficient in $\partial (\overline{x} + w)$, and $(\II,\JJ)(\overline{x}) = (1,\frac{2g+n-1}{2n})$,  there must be a nontrivial term in $\overline{x} + w$ with $(i,j)$ filtration $(1,g)$. Since $(0,g)$ (resp.~$(1,g)$) is the highest $(i,j)$ filtration in $\{\II=0\}$ (resp.~$\{\JJ = \frac{2g+n-1}{2n}\}$), if we let $y = \overline{y} +w'$ (resp.~$x = \overline{x} +w$), it is a filtered element in $C$ with $(i,j)$ filtration $(0,g)$ (resp.~$(1,g)$). Next we perform a series of filtered changes of basis on $C$ as follows. 

 First, notice that any element in $\{i=1,j\leq g\}$ has the same $(\II,\JJ)$ filtration as $x$. Since $x$ is nontrivial in $C^\mu/W$, we can find some $w''\in \{i=1,j\leq g\}$ (potentially $w''=0$) such that no elements in $\{i=1,j\leq g\}$ have nontrivial coefficient in $\partial (x+w'')$. We perform a filtered change of basis $x \mapsto x+w''$; by abuse of notation we denote the resulting generator also by $x$. Now any nontrivial term in $\partial x$ strictly decreases the $i$ filtration. By a filtered change of basis, we can add all other terms to $y$. Denote the resulting generator by $y$. We now have $\partial x = y.$ It follows that $\partial y = \partial^2x=0$

 Next, suppose that $x$ has nonzero coefficient in $\partial z$ for some $z\in C$. By $\partial^2 z=0$ there must be some $s\in C$ with nonzero coefficient in $\partial z$ such that $y$ has nonzero coefficient in $\partial s$. Since $C$ is reduced, $s$ must have $(i,j)$ filtration greater than or equal to $x$. We can perform a filtered change of basis $s\mapsto s+x$, such that in the resulting complex, the number of incoming arrows to $x$ is decreased by $1.$ Repeat this process until there is no incoming arrow to $x.$ We infer that
 the dimension of  $H_*(\{i=0\})$ is at least $2$, since in the complex we obtained after the change of basis, both $x$ and $y$ survive into the homology. This contradicts the fact that  $H_*(\{i=0\})\cong \HFa(S^3)$ and $\operatorname{dim}H_*(\HFa(S^3))=1$.
 
 Next, consider the case for any positive integer $n$. The preceding arguments imply that there is no internal differential in $A_g(K)$ that points to the filtration $(\II,\JJ)=(0,\tfrac{2g+n-1}{2n})$ while preserving the $\JJ$ filtration.  The result follows from the fact that $A_g(K)$ is a quotient complex in the truncated mapping cone \eqref{eq:spincgmappingcone}, and the only mapping cone differential $v^\infty_g$ restricted to $\{(\II,\JJ)=(0,\frac{2g+n-1}{2n})\}$ strictly decreases the $\JJ$ filtration. 
\end{proof}

\subsection{Legendrian surgeries that are negative smooth surgeries} 
We will prove a version of Theorem \ref{thm: LOSS determines contact for admissible Legendrian} when the smooth surgery coefficient is negative. 
\begin{theorem} \label{thm: leq -2g-1 surgery}
    Let $K$ be a knot in $S^3$, and $L$ be a Legendrian representative of $K$ with $\tb(L)=-n+1$, $\rot (L)=-n-2g+2$ in some contact $S^3$. Let the contact manifold $(S^3_{-n}(K),\xi_{L_{}})$ be obtained from the contact $(-1)$-surgery (Legendrian surgery) on $L$ and let $L'$ be the Legendrian push-off of $L$ in $(S^3_{-n}(K),\xi_{L})$. Then, for any positive integer $n$, the map $f$ in Lemma \ref{lem: f maps rational LOSS to Contact invariant} that maps $\mathfrak{L}(L')$ to $c(\xi_L)$ is injective. 
\end{theorem}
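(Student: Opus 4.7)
The plan is to reduce everything to Proposition \ref{Prop: inject at top grading} by showing that $\mathfrak{L}(L')$ sits at the top Alexander grading of a single $\Spin^c$ summand of $\HFKm(-S^3_{-n}(K), L')$, and that this is precisely the grading where Proposition \ref{Prop: inject at top grading} applies.

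First I would identify $L'$ smoothly. Since $L'$ is the contact push-off of $L$, it lies on $\partial N(L)$ as a $\tb$-framed parallel copy of $L$. A direct computation using $(\lambda_L^c)\cdot(\lambda_L^c-\mu_L) = 1$ shows $L'$ is a longitude of the new solid torus, so it is smoothly isotopic to the dual knot $\mu$ of the $(-n)$-surgery; it generates $H_1(S^3_{-n}(K))\cong \Z/n\Z$ and therefore is rationally null-homologous of order $n$. Reversing orientation identifies $-S^3_{-n}(K)$ with $S^3_n(m(K))$, where $m(K)$ is the mirror and $g(m(K))=g$, so Lemma \ref{lem: top A grading} and Proposition \ref{Prop: inject at top grading} apply after replacing $K$ by its mirror.

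Next I would compute the rational classical invariants of $L'$. Since $L'$ is a contact parallel copy of $L$, the rational linking formulas of Baker--Etnyre together with the hypothesis $\tb(L)=-n+1,\ \rot(L)=-n-2g+2$ yield
\[
\tb_\Q(L')-\rot_\Q(L') \;=\; \frac{\tb(L)-\rot(L)}{n} \;=\; \frac{2g-1}{n}.
\]
Applying Theorem \ref{thm: A grading of LOSS},
\[
A(\mathfrak{L}(L')) \;=\; \tfrac{1}{2}\bigl(\tb_\Q(L')-\rot_\Q(L')+1\bigr) \;=\; \frac{2g+n-1}{2n},
\]
which by Lemma \ref{lem: top A grading} is exactly the top Alexander grading of $\HFKm(-S^3_{-n}(K), L', g)$. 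Moreover, inspection of the filtration formulas \eqref{eq: filtration_s3_1}--\eqref{eq: filtration_s3_4} shows that distinct $\Spin^c$ summands realize Alexander gradings in distinct residue classes modulo $\Z$, so this numerical value alone pins $\mathfrak{L}(L')$ into the $\Spin^c = g$ summand (equivalently $-g$ by $\Spin^c$-conjugation symmetry).

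Having located $\mathfrak{L}(L')$ at the top Alexander grading of $\HFKm(-S^3_{-n}(K), L', g)$, Proposition \ref{Prop: inject at top grading} applied to $m(K)$ gives injectivity of $f\colon \HFKm(-S^3_{-n}(K), L', g)\to \HFa(-S^3_{-n}(K), 1-g)$ at that grading, which is the conclusion of the theorem. The main obstacle is the careful verification of $\tb_\Q(L')-\rot_\Q(L')$ and the identification of the $\Spin^c$ structure. As a sanity check on the latter, Theorem \ref{thm: same Spin^c for Legendrian with classical invariants} predicts that $c(\xi_L)$ lies in the $\Spin^c$ structure with $\langle c_1,[\tilde F]\rangle = \rot_\Q(L)+n-1 = -2g+1$; after the $J-\PD(\mu)$ shift built into \eqref{LOSS to contact invariant}, this is consistent with $\mathfrak{L}(L')$ sitting in $\Spin^c = g$, so the two computations corroborate each other.
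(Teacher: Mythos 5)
Your proposal is correct and follows essentially the same route as the paper: identify $L'$ smoothly with the dual knot $\mu$, compute $\tbq(L')-\rotq(L')=\frac{2g-1}{n}$ so that $A(\mathfrak{L}(L'))=\frac{2g+n-1}{2n}$ is the top Alexander grading of the $\Spin^c=g$ summand (Lemma \ref{lem: top A grading}), and then invoke Proposition \ref{Prop: inject at top grading} for the mirror. The only cosmetic difference is that you pin down the $\Spin^c$ summand from the residue class of the Alexander grading modulo $\Z$, whereas the paper first locates $c(\xi_L)$ in the $\Spin^c$ structure $1-g$ via Theorem \ref{thm: same Spin^c for Legendrian with classical invariants} and then transfers to $g$ through \eqref{eq: g to 1-g map} --- precisely the computation you relegate to a sanity check.
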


\begin{proof}
We start by making two claims:

    \textbf{Claim 1:} The contact invariant $c(\xi_{L}) $ is in $ \HFa(S^3_{n}(-K),1-g)$ where $-K$ is the mirror of $K$.
    
    \textbf{Claim 2:} The LOSS invariant $\mathfrak{L}(L') $ is in $  \HFKm(S^3_n(-K),\mu,g)$ and the Alexander grading of $\mathfrak{L}(L')$ is at the top grading which is $\frac{2g+n-1}{2n}$. 

The theorem then follows directly from Proposition \ref{Prop: inject at top grading} together with these two claims, so it remains only to establish the claims.


\textbf{Proof of Claim 1:} 

Recall that  the identification between $\spincs \in\Spin^c(S^3_n(K)) $ and $i\in \Z/n\Z$ is given by \[\langle c_1(\underline\spincs),[\Tilde{F}] \rangle  - n \equiv 2i  \quad (\operatorname{mod} 2n ) \] where $\underline\spincs$ is the extension of  $\spincs$ to the cobordism. 
Since $\rot(L)=-n-2g+2$ we have 
$$\langle c_1(\underline\spincs),[\Tilde{F}] \rangle-n=-2n-2g+2=-2g+2 \ (mod \ 2n),$$ we have $i=-g+1$ which proves the claim.

\textbf{Proof of Claim 2:}
By Claim 1, the contact invariant lies in the Floer homology group associated to the $\Spin^c$ structure $1-g$. According to  Lemma \ref{lem: f maps rational LOSS to Contact invariant} and  \eqref{eq: g to 1-g map}, we have that the LOSS invariant is in the $\Spin^c$ structure $g$. 

Moreover,
 by \cite[Proposition 2]{DGh}, $L'$ is smoothly isotopic to the dual knot of $L$ in $S^3_{-n}(K)$, hence $\mathfrak{L}(L')\in \HFKm(S^3_n(-K),\mu,g)$, and we left the Alexander grading to find. To determine the Alexander grading, we need to find $\tbq(L')$ and $\rotq(L')$ and then use Theorem \ref{thm: A grading of LOSS}. 

  Those invariants can be calculated using \cite[Lemma 4.1]{DingLiWuContact+1onrationalsphere}. We have 

  \[\tbq(L')=(-n+1)+\frac{-(-n+1)^2}{-n}=\frac{1-n}{n}\]

  and \[\rotq(L')= (-n-2g+2)-\frac{(-n-2g+2)(-n+1)}{-n}=\frac{-n-2g+2}{n}.
  \]
Thus, the Alexander grading \[A(\mathfrak{L}(L'))=\frac{\tbq(L') -\rotq(L')+1}{2}=\frac{2g+n-1}{2n},\] which finishs the proof of the claim 2.

\end{proof}


\section{Proof of Theorem \ref{thm: LOSS determines contact for admissible Legendrian}}\label{sec: proof of LOSS determines contact for admissible Legendrian}

We first recall the Ding-Geiges-Stipsicz algorithm \cite
{DGS} for representing positive contact surgeries. 

\begin{theorem}[\cite
{DGS} DGS algorithm for $r>0$] {\label{DGS positive contact srugery}}
Given a Legendrian knot $L$ in $(Y,\xi)$. Let $0<x/y=r\in \mathbb{Q}$ be a contact surgery coefficient. Let $e\in \mathbb{Z}$ be the minimal positive integer such that $\frac{x}{y-ex}<0$, with the continued fraction \begin{equation}
    \frac{x}{y-ex}=[a_1,a_2,\dots,a_\l]=a_1-\cfrac{1}{a_2-\cfrac{1}{\dots-\cfrac{1}{a_\l}}}
\end{equation}
where each $a_j\leq -2$. Then any contact $(x/y)$-surgery on $L$ can be described as contact surgery along a link $(L_0^1 \cup L_0^2 \cup \dots \cup L_0^e)\cup L_1 \cup\dots\cup L_{\ell}$, where 
\begin{itemize}
     \item $L_0^1,\dots,L_0^e$ are Legendrian push-offs of $L$.
    \item $L_1$ is obtained from a Legendrian push-off of $L_0^e$ by stabilizing $|a_1+1|$ times.
    \item $L_j$ is obtained from a Legendrian push-off of $L_{j-1}$ by stabilizing $|a_j+2|$ times, for $j\geq 2$.
    \item The contact surgery coefficients are $+1$ on each $L_0^i$ and $-1$ on each $L_j$.
\end{itemize}
    \end{theorem}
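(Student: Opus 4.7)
The plan is a two-phase reduction. First, an \emph{absorption phase} exchanges the positive rational coefficient on $L$ for a negative one by introducing $e$ contact $(+1)$-surgeries on Legendrian pushoffs $L_0^1, \ldots, L_0^e$ of $L$. Second, a \emph{stabilization phase} invokes the already-established Ding-Geiges-Stipsicz algorithm for negative rational contact surgery to realize the residual negative coefficient $x/(y-ex)$ as a continued fraction via contact $(-1)$-surgeries on a chain of successively stabilized Legendrian pushoffs.

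The key technical input for the first phase is a \emph{pushoff absorption lemma}: contact $(+1)$-surgery on a Legendrian pushoff of $L$ combined with contact $r$-surgery on $L$ is contactomorphic to a single contact $r/(1-r)$-surgery on a Legendrian knot isotopic to $L$. This is a contact-topological upgrade of a smooth Kirby calculus slam-dunk, provable by a local open book or convex-surface analysis in a neighborhood of the pushoff. Iterating the fractional linear transformation $r \mapsto r/(1-r)$ exactly $e$ times starting from $r = x/y$ sends it to $x/(y-ex)$, and the minimality of $e$ in the statement is precisely what is needed to land in the negative regime where the second phase applies. Thus the $e$ contact $(+1)$-surgeries on $L_0^1,\ldots,L_0^e$ account for this coefficient change while preserving the ambient contact manifold.

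For the second phase, expand $x/(y-ex) = [a_1,\ldots,a_\ell]$ as a negative continued fraction with each $a_j \leq -2$ and apply the negative DGS algorithm to $L_0^e$. The stabilization counts $|a_1 + 1|$ for $L_1$ and $|a_j + 2|$ for $j \geq 2$ are dictated by Thurston-Bennequin bookkeeping: they are exactly what is required so that contact $(-1)$-surgery on $L_j$ produces the correct smooth framing $a_j$ along the chain of pushoffs, taking into account both $\tb(L_0^e) = \tb(L)$ and the self-linking shifts introduced by preceding components. Concatenating with the first phase assembles the full Legendrian link with $\pm 1$ contact surgery coefficients described in the statement.

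The main obstacle is the absorption lemma itself: verifying it at the contact-geometric (rather than merely smooth) level requires an explicit local model near the pushoff, and keeping sign conventions straight for pushoff orientation and for positive versus negative stabilization is subtle. This verification is the substantive content of \cite{DGS}. Once the lemma is granted, the remainder of the proof is bookkeeping with continued fractions and Thurston-Bennequin numbers.
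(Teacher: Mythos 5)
The paper does not prove this statement: it is quoted verbatim (up to indexing conventions) from \cite{DGS} as background, so there is no in-paper proof to compare against. Your sketch is, however, a faithful outline of the original Ding--Geiges--Stipsicz argument: the ``absorption'' step is exactly their replacement lemma (contact $r$-surgery on $L$ equals contact $(+1)$-surgery on a Legendrian push-off together with contact $r'$-surgery on $L$, where $\tfrac{1}{r'}=\tfrac{1}{r}-1$), iterated $e$ times to drive the residual coefficient from $x/y$ to $x/(y-ex)<0$, after which the negative-coefficient algorithm takes over. Two small cautions. First, you state the absorption lemma in the wrong direction: combining a $(+1)$-push-off with contact $r$-surgery yields contact $r/(1+r)$-surgery, whereas \emph{extracting} a $(+1)$-push-off from contact $r$-surgery leaves the residual coefficient $r/(1-r)$; the iteration $r\mapsto r/(1-r)$ you then apply is the correct one for extraction, so your formulas are right but inconsistent with your own phrasing of the lemma. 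Second, describing the negative-coefficient phase as ``Thurston--Bennequin bookkeeping'' undersells it: the stabilization counts $|a_1+1|$ and $|a_j+2|$ are forced not merely by framing arithmetic but by the correspondence, via Honda's classification of tight contact structures on solid tori, between sign choices of basic slices in the surgery torus and the chain of stabilized push-offs; that correspondence is the actual content of the $r<0$ case. With those caveats, the structure of your argument matches the source.
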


Moreover, when we are talking about the contact surgery in general, we always assume the choice of stabilization being negative. We also have the lemma here that helps us understand negative stabilizations.

\begin{lemma} \label{lem: n+1/n surgery and n negative 
stabilization}\cite[Lemma 5.4]{WZnegativecontactsurgery}
    The two Legendrian arcs $e_1$ and $e_2$ depicted in Figure \ref{New L} are Legendrian isotopic. 
\end{lemma}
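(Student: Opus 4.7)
Since this statement is attributed to \cite[Lemma 5.4]{WZnegativecontactsurgery}, the plan is to reproduce the main idea of that argument, which relies only on a contact handle cancellation and explicit Legendrian Reidemeister moves. The figure in question places the arcs $e_1$ and $e_2$ inside a contact surgery diagram containing a Legendrian knot carrying contact $(+1)$-surgery together with a Legendrian push-off carrying contact $(-1)$-surgery and $n$ negative stabilizations, so the essential ingredient should be the $(\pm 1)$-cancellation lemma combined with a local Legendrian isotopy in the front projection.

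First I would recall the Ding--Geiges--Stipsicz cancellation move: if $L_0$ is a Legendrian knot carrying contact $(+1)$-surgery and $L_0'$ is its Legendrian push-off carrying contact $(-1)$-surgery, then the pair can be deleted from any contact surgery diagram without changing the resulting contact $3$-manifold. Topologically, the two $2$-handles cancel, and the corresponding region in the surgered manifold is a standard contact Darboux ball. Consequently any Legendrian arc disjoint from the cancelling pair is unaffected, and any arc passing through this region can be freely Legendrian-isotoped inside the Darboux ball.

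Next I would localize both $e_1$ and $e_2$ to a standard neighborhood of the configuration in the figure, apply the cancellation to remove the $(+1)/(-1)$ pair, and then exhibit an explicit Legendrian isotopy between the two arcs using Legendrian Reidemeister moves. In the front projection, the $n$ negative stabilizations contribute $n$ downward zig-zags that the arc must be pushed past; each zig-zag can be crossed using a Legendrian Reidemeister II move, and iterating reduces $n$ one stabilization at a time down to the base case where $e_1$ and $e_2$ coincide on the nose. Combined with the cancellation in the previous step, this yields the desired Legendrian isotopy in the ambient contact $3$-manifold.

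The main obstacle is bookkeeping in the front projection: one must check that every intermediate isotopy is supported inside the cancelling Darboux ball and does not cross any other component of the surgery diagram, so that the isotopy descends to the surgered contact manifold rather than merely to its pre-surgery model. I would handle this by fixing at the outset a contact neighborhood of $L_0 \cup L_0'$ small enough to avoid all other surgery curves, carrying out the entire cancellation and Reidemeister sequence inside this neighborhood, and only then gluing the argument back into the ambient diagram.
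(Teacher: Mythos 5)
The paper does not actually prove this lemma: it is imported verbatim as \cite[Lemma 5.4]{WZnegativecontactsurgery}, so there is no in-paper argument to compare against. Judged on its own terms, however, your proposal has a genuine gap at its central step.

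The configuration produced by the DGS algorithm for contact $(+\tfrac{n+1}{n})$-surgery on the standard Legendrian meridian is a $(+1)$-surgery on a push-off $\mu_0$ together with a $(-1)$-surgery on a push-off of $\mu_0$ that has been stabilized $n$ times (here $e=1$ and $\tfrac{x}{y-ex}=-(n+1)$, so $|a_1+1|=n$). The Ding--Geiges cancellation lemma applies only when the $(-1)$-curve is an \emph{unstabilized} Legendrian push-off of the $(+1)$-curve; it does not apply to this pair. Worse, if one could cancel the pair as you propose, the surgery would have no effect at all and $e_1$ would be Legendrian isotopic to the arc with \emph{zero} zig-zags, which contradicts the statement being proved. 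So the cancellation step is not merely unjustified; taken at face value it proves something false.

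The second step has the same problem in a different guise: you assert that each of the $n$ zig-zags of $e_2$ ``can be crossed using a Legendrian Reidemeister II move, reducing $n$ one stabilization at a time.'' A stabilization is not removable by Legendrian Reidemeister moves --- it changes the Thurston--Bennequin invariant and rotation number of the arc rel endpoints --- and the entire content of the lemma is that the (nontrivial) surgery on the meridian is what accounts for the $n$ stabilizations. A correct argument must instead work in a standard neighborhood of the meridian union the arc, identify the result of the $(+\tfrac{n+1}{n})$-surgery there as a tight ball or solid torus (e.g.\ via convex surface theory and bypass attachments, or by induction on $n$ using the fact that a single contact $(+2)$-surgery on the meridian effects one negative stabilization), and then pin down the image of $e_1$ using the classification of Legendrian arcs in that tight piece together with the computation of how the surgery shifts the twisting and rotation. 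Your proposal contains none of these ingredients, so as written it does not establish the lemma.
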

\begin{figure}[htb!]
\centering
\begin{tikzpicture}
\begin{scope}[thin, black!0!white]
          \draw  (-5, 0) -- (5, 0);
      \end{scope}
    \node at (-3,0){\includegraphics[scale=0.6]{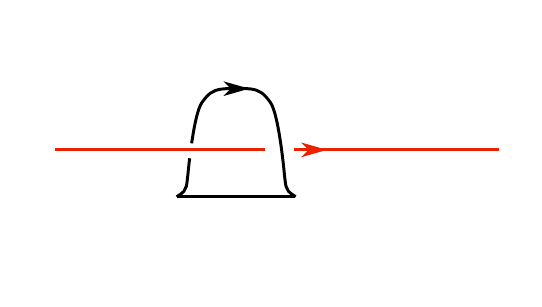}};
     \draw [decorate,decoration={brace,amplitude=4pt},xshift=0.5cm,yshift=0pt]
      (2.8,0.5) -- (2.8,-0.7) node [midway,right,xshift=0.1cm] {\small $n$};
      \node at (-3.5,1.1){\small $+\frac{n+1}{n}$};
        \node at (0,0){ $\sim$};
         \node at (-1.5,.5)[text=red]{ $e_1$};
          \node at (1.8,0.3)[text=red]{ $e_2$};
       \node at (3,0){\includegraphics[scale=0.7]{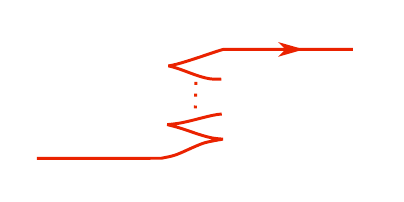}};
\end{tikzpicture}
    \caption{$n$ in the second diagram indicates $n$ zigzags (i.e. $n$ negative-stabilizations). We call the Legendrian knot, along which the $\frac{n+1}{n}$-surgery is performed in the first diagram, the standard Legendrian meridian of the knot which $e_1$ belongs to.}
    \label{New L}
\end{figure} 

Now we rewrite the Theorem \ref{thm: LOSS determines contact for admissible Legendrian} as follows. 

\begin{theorem}
 Let $L_1$ and $L_2$ be two Legendrian knots with the same knot type $K$ and $\tb(L_1)=\tb(L_2)$. Let $(Y_i,\xi_i)$ denote the Legendrian surgery on $L_i$, respectively.  
 \begin{enumerate}
     \item If $\mathfrak{L}(L_1)\neq \mathfrak{L}(L_2)$ then $c(\xi_1)\neq c(\xi_2)$.
     \item If $\mathfrak{L}(L_1)\neq 0$ then $c(\xi_1)\neq0$.
 \end{enumerate}   
\end{theorem}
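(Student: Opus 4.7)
Admissibility forces $\rot(L_1) = \rot(L_2) = \tb - 2g + 1$; thus Legendrian surgery on either $L_i$ produces the same smooth manifold $Y = S^3_{\tb - 1}(K)$, and by Theorem~\ref{thm: same Spin^c for Legendrian with classical invariants} the two Stein cobordisms carry the same distinguished $\Spin^c$-structure, so $c(\xi_1)$ and $c(\xi_2)$ lie naturally in a common summand of $\HFa(-Y)$. I focus on the case $\tb \leq 0$, writing $n := 1 - \tb \geq 1$ and $Y = S^3_{-n}(K)$; the case $\tb \geq 2$ is handled by a parallel argument involving positive smooth surgery.

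\textbf{Strategy via pushoffs.} For $i = 1, 2$ let $L_i'$ denote the Legendrian pushoff of $L_i$ inside $(Y, \xi_i)$. Lemma~\ref{lem: f maps rational LOSS to Contact invariant} provides a map $f\co \HFKm(-Y, L_i') \to \HFa(-Y)$ with $f(\mathfrak{L}(L_i')) = c(\xi_i)$, and the key input will be the injectivity of $f$ on the summand containing both LOSS classes. Using \cite[Lemma 4.1]{DingLiWuContact+1onrationalsphere}, the rational classical invariants of the pushoffs are $\tb_{\mathbb{Q}}(L_i') = (1-n)/n$ and $\rot_{\mathbb{Q}}(L_i') = (-n - 2g + 2)/n$, both independent of $i$. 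Theorem~\ref{thm: A grading of LOSS} then places $\mathfrak{L}(L_1')$ and $\mathfrak{L}(L_2')$ at common Alexander grading $(2g + n - 1)/(2n)$, which by Lemma~\ref{lem: top A grading} is the top Alexander grading of $\HFKm(-Y, L_i', g) \cong \HFKm(S^3_n(-K), \mu, g)$. Theorem~\ref{thm: leq -2g-1 surgery} supplies the required injectivity of $f$ on precisely this summand.

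\textbf{Conclusions.} For (1), Corollary~\ref{cor: Legen surgery on distinct loss give distinct loss} upgrades $\mathfrak{L}(L_1) \neq \mathfrak{L}(L_2)$ to $\mathfrak{L}(L_1') \neq \mathfrak{L}(L_2')$, and injectivity of $f$ delivers $c(\xi_1) = f(\mathfrak{L}(L_1')) \neq f(\mathfrak{L}(L_2')) = c(\xi_2)$. For (2), the same naturality principle underlying that corollary—viewing the Legendrian surgery on $L_1$ in reverse as a contact $(+1)$-surgery on $L_1'$ and invoking Theorem~\ref{Naturality of LOSS}—yields a cobordism map sending $\mathfrak{L}(L_1') \mapsto \mathfrak{L}(L_1)$. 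Hence $\mathfrak{L}(L_1) \neq 0$ forces $\mathfrak{L}(L_1') \neq 0$, and $c(\xi_1) = f(\mathfrak{L}(L_1')) \neq 0$ by injectivity.

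\textbf{Main obstacle.} The essential subtlety is aligning the $\Spin^c$-data so that $\mathfrak{L}(L_1')$, $\mathfrak{L}(L_2')$, $c(\xi_1)$, $c(\xi_2)$ all sit in summands linked by a single $f$; admissibility (through $\tb_{\mathbb{Q}}$, $\rot_{\mathbb{Q}}$, and the Alexander grading formula of Theorem~\ref{thm: A grading of LOSS}) is exactly what makes this rigidity hold. The remaining case $\tb \geq 2$ requires an analogous injectivity statement for negative smooth surgery on the mirror knot, which I expect to establish by running the chain-level mapping-cone argument of Proposition~\ref{Prop: inject at top grading} in the negative-surgery setting; the top-grading analysis must be redone in that regime but should be structurally the same.
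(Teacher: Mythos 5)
Your treatment of the case $\tb(L_i)\leq 0$ is essentially the paper's Case 1: pass to the Legendrian push-offs $L_i'$, identify them smoothly with the dual knot, pin down the Alexander grading of $\mathfrak{L}(L_i')$ via Theorem \ref{thm: A grading of LOSS} and \cite[Lemma 4.1]{DingLiWuContact+1onrationalsphere}, and conclude from the injectivity supplied by Theorem \ref{thm: leq -2g-1 surgery} (i.e.\ Proposition \ref{Prop: inject at top grading}) together with Corollary \ref{cor: Legen surgery on distinct loss give distinct loss}; your derivation of part (2) by running Theorem \ref{Naturality of LOSS} in the same direction also matches the paper.

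The gap is the case $\tb(L_i)\geq 2$. You defer it to ``an analogous injectivity statement for negative smooth surgery on the mirror knot,'' to be obtained by rerunning the mapping-cone argument, and you assert the top-grading analysis ``should be structurally the same.'' It is not, and the paper deliberately does not go this route. Proposition \ref{Prop: inject at top grading} is proved only for positive integer surgeries, and its proof leans on features specific to that regime: in the truncated cone \eqref{eq:spincgmappingcone} for $n>0$ there is one more $A$-summand than $B$-summand, $A_g(K)$ sits as a quotient complex with only $v^\infty_g$ leaving it, and the $(\II,\JJ)$-formulas \eqref{eq: filtration_s3_2}, \eqref{eq: filtration_s3_4} have denominators $2n$ whose sign governs which filtration level is ``top.'' For $n<0$ the truncation has extra $B$-summands, the location of the top Alexander grading changes, and $A_g$ is no longer a quotient complex in the same way, so none of Lemma \ref{lem: top A grading}, Claim 2 of Theorem \ref{thm: leq -2g-1 surgery}, or the contradiction with $\dim\HFa(S^3)=1$ transfers without a new argument. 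The paper instead reduces $\tb\geq 2$ to $\tb\leq 0$ by negative stabilizations realized as contact $(+\tfrac{n+1}{n})$-surgeries on the standard Legendrian meridian (Lemma \ref{lem: n+1/n surgery and n negative stabilization}) and the naturality of the contact invariant under positive contact surgery (Theorem \ref{thm: same Spin^c for Legendrian with classical invariants}); and even that reduction breaks at $\tb=2$, where an intermediate manifold fails to be a rational homology sphere and the naturality hypothesis fails, forcing the paper's separate detour through contact $(+3)$- and $(+\tfrac32)$-surgeries (Claims (a) and (b) in Case 3). Your sketch shows no awareness of either obstruction, so as written the proof covers only $\tb\leq 0$.
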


\begin{proof}


We will divide it into $3$ cases and prove the first statement first. 

\begin{enumerate}
    \item[Case 1.] $\tb(L_i)\leq 0$.
    
    This situation follows directly from Corollary \ref{cor: Legen surgery on distinct loss give distinct loss} and Theorem \ref{thm: leq -2g-1 surgery}. 
    


    \item[Case 2.] $2<\tb(L_i)$ 

Let us first assume when $\tb(L_i)=2$ the statement is true and prove the case for $2<\tb(L_i)$, and in the third case we will prove the situation when $\tb(L_i)=2$.  

Since $L_i$ is admissible, we can first find some positive integer $n$ such that, $L_i^{-n}$, the $n$ negatively stabilized $L_i$  are admissible and $\tb(L_1^{-n})=\tb(L_2^{-n})=2$. Moreover, Theorem \ref{LOSS invariant under negative stabilization} and the condition $\mathfrak{L}(L_1)\neq \mathfrak{L}(L_2)$ implies $\mathfrak{L}(L_1^{-n})\neq \mathfrak{L}(L_2^{-n})$. Let $(X_i,\delta_i)$ be the contact $3$-manifold obtained by taking the Legendrian surgery on $L_i^{-n}$ respectively, then the assumption of $\tb =2 $ case tells us $c(\delta_1)\neq c(\delta_2)$. 

Now the Lemma \ref{lem: n+1/n surgery and n negative 
stabilization} above says that $(X_i,\delta_i)$ is obtained by taking contact $(+\frac{n+1}{n})$-surgery (which is equivalent to a sequence of $(+2)$-contact surgeries) on the standard Legendrian meridian of $L_i$ in $(Y_i,\xi_i)$. Note that, for each step of performing a contact $(+2)$-surgery, we are start and end at some rational homology spheres, and thus the naturality of contact invariant under positive contact surgery (Theorem \ref{thm: same Spin^c for Legendrian with classical invariants}) implies that the $HF$ maps $c(\xi_i)$ to $c(\delta_i)$ for $i=1,2$ respectively. Hence, we have $c(\xi_1)\neq c(\xi_2)$. See Figure \ref{fig:n+1/n contact surgery}
    \item[Case 3.] $\tb(L_i)=2$.
    
This is the most complicated case. When we try to follow the same argument as the second case (going from $\tb=2$ to $\tb=0$), we will end at a non-rational homology sphere after we perform the first $(+2)$-contact surgery in the sequence. Hence, we can not use the similar naturality argument as Case 2.  However, there are some tricks to get around it.  

Let $(Y^3_i,\xi^3_i)$ be the contact manifold obtained by taking contact $(+3)$-surgery along the standard Legendrian meridian of $L_i$ in $(Y_i,\xi_i)$. 

We first make a claim $(a)$:  the contact invariant $c(\xi^3_1) \neq c(\xi^3_2)$. If the claim $(a)$ is true, then the naturality condition is satisfied, so the same argument as in case $(1)$ will imply $c(\xi_1)\neq c(\xi_2)$.  

\emph{Proof of claim $(a)$:} We denote the standard Legendrian meridian of $L_i$ as $\prescript{}{i}U$. By applying the DGS algorithm \ref{DGS positive contact srugery}, we can represent contact $(+3)$-surgery on $\prescript{}{i}U$ as contact surgery along $\prescript{}{i}U_0\cup \prescript{}{i}U_1\cup \prescript{}{i}U_2$ where $\prescript{}{i}U_0$ is the Legendrian push-off of $\prescript{}{i}U$, $\prescript{}{i}U_1$ is the Legendrian push-off of $\prescript{}{i}U_0$ with one negative stabilization, and $\prescript{}{i}U_2$ is the Legendrian push-off of $\prescript{}{i}U_1$, with contact coefficient $(+1)$ on $\prescript{}{i}U_0$ and $(-1)$ on the other two.

 We now make a claim $(b)$: After performing a contact $(+2)$-surgery on the standard meridian of $\prescript{}{i}U_1$ in $(Y_i^3,\xi_i^3)$, the result contact manifolds $(Y_i^{3,2},\xi_i^{3,2})$ have different contact invariants. Note that if the claim $(b)$ is true then again the naturality condition is satisfied and the claim $(a)$ follows.

\emph{Proof of claim $(b)$:} First observe that by Lemma \ref{lem: n+1/n surgery and n negative 
stabilization}, contact $(-1)$-surgery on $\prescript{}{i}U_1$ followed by contact contact $(+2)$-surgery on the standard meridian of $\prescript{}{i}U_1$ is equivalent to contact $(-1)$-surgery on $\prescript{}{i}U_1^{-1}$, the negative stabilization of $\prescript{}{i}U_1$. 

As a consequence, $(Y_i^{3,2},\xi_i^{3,2})$ can be viewed as taking contact $(+1)$-surgery on $\prescript{}{i}U_0$, contact $(-1)$-surgery on $\prescript{}{i}U_1^{-1}$ (which is also the push-off of $\prescript{}{i}U_0$ with \emph{two} negative stabilizations), and contact $(-1)$-surgery on $\prescript{}{i}U_2$. According to the DGS algorithm \ref{DGS positive contact srugery}, the contact $(+1)$-surgery on $\prescript{}{i}U_0$ followed by the contact $(-1)$-surgery on  $\prescript{}{i}U_1^{-1}$ is equivalent to contact $(+\frac{3}{2})$-surgery on $\prescript{}{i}U$. Thus, $(Y_i^{3,2},\xi_i^{3,2})$ can also be obtained from $(Y_i,\xi_i)$ (recall this is the Legendrian surgery on $L_i$) by first taking contact $(+\frac{3}{2})$-surgery on $\prescript{}{i}U_0$ followed by the Legendrian surgery on $\prescript{}{i}U_2$, which by Lemma \ref{lem: n+1/n surgery and n negative 
stabilization} again is equivalent to performing the Legendrian surgery on $L_i^{-2}$ followed by the Legendrian surgery on $\prescript{}{i}U_2$. 

Now $L_i^{-2}$ satisfies the case $(1)$ condition, so Legendrian surgeries on $L_i^{-2}$ give different contact invariants. Moreover, since a Legendrian surgery preserves the distinction between contact invariants (for example see \cite[Theorem 1.1]{Wantightcontactstructures}), after taking Legendrian surgeries on $\prescript{}{i}U_2$  (previous argument tells us this give us back to $(Y_i^{3,2},\xi_i^{3,2})$) we still have distinct contact invariants. Thus, claim $(b)$ is true which implies claim $(a)$ is true. This concludes the proof of the case when $\tb(L_i)=2$.

    
    
\end{enumerate} 

Exactly the same argument will show the second statement in the theorem that $\mathfrak{L}(L_1)\neq 0$ implies $c(\xi_1)\neq 0$ holds as well.

\end{proof}

\begin{figure}[htb!]
\centering
\begin{tikzpicture} 
\begin{scope}[thin, black!0!white]
          \draw  (-5, 0) -- (5, 0);
      \end{scope}
    \node at (-4,0){\includegraphics[scale=0.5]{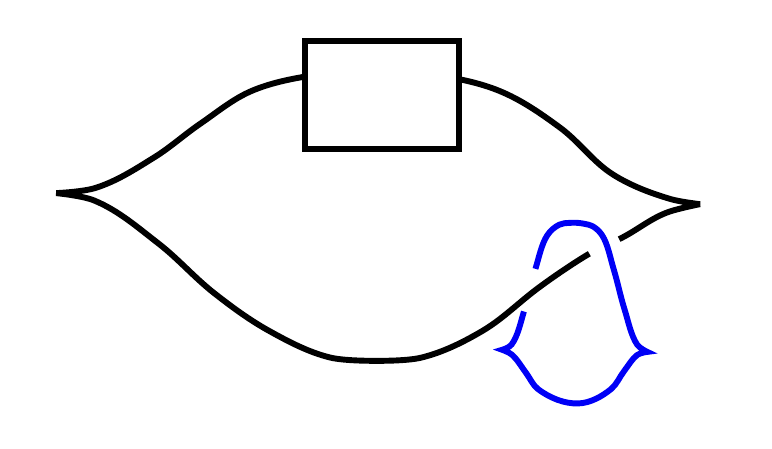}};
     \node at (-2,1){\small $-1$};
      \node at (-1.7,-0.5)[text=blue]{\small $\frac{n+1}{n}$};
    \node at (6,1){\small $-1$};
      \node at (-3.97,1.1){ $L_i$};
          \node at (4.05,1.1){ $L_i^{-n}$};
    \node at (0,0){\Large $\rightarrow$};
       \node at (4,0){\includegraphics[scale=0.5]{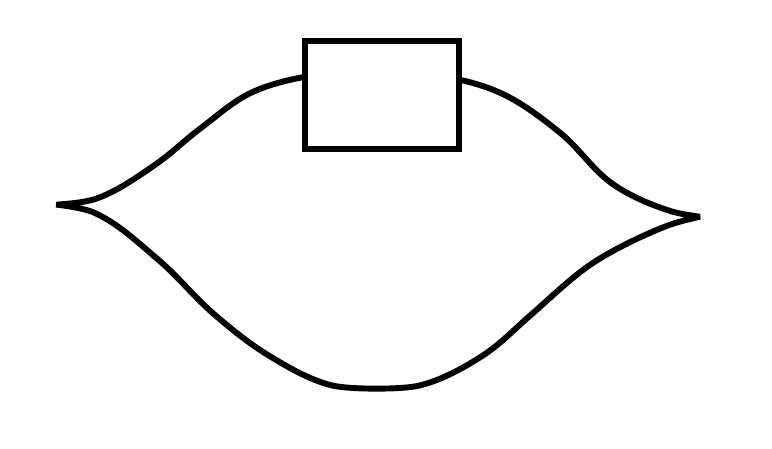}};
\end{tikzpicture}
\caption{The Legendrian surgery on $L_i^{n}$ is obtained by contact $(+\frac{n+1}{n})$-surgery (a sequence of $(+2)$-contact surgeries) on the standard Legendrian meridian of the Legendrian surgery on $L_i$. }
\label{fig:n+1/n contact surgery}
\end{figure}

\section{Construction of admissible Legendrian representatives}\label{sec: construction of admissible Legendrian}
In this section, we use techniques from balanced sutured manifolds and sutured Floer homology to establish Theorem \ref{thm: 2g-1 non-loose knot}. 

\subsection{Balanced sutured manifolds}
Suppose $M$ is a compact oriented $3$-manifold with non-empty boundary. Let $\gamma=(A(\gamma),s(\gamma))$ be a pair where $A(\gamma)\subset\partial M$ is a disjoint union of embedded annuli on $\partial M$ and $s(\gamma)$ is the core of $A(\gamma)$ and is oriented. Since $\partial A(\gamma)$ is parallel to $s(\gamma)$, we orient $\partial A(\gamma)$ according to the given orientation of $s(\gamma)$. We can decompose $R(\gamma) = \overline{\partial M - A(\gamma)}$ into two parts:
\[
R(\gamma) = R_+(\gamma) \cup R_-(\gamma),
\]
where $R_+(\gamma)$ is the part of $R(\gamma)$ such that the orientation induced by $\partial M$ coincides with the orientation induced by $\partial R(\gamma) = \partial A(\gamma)$, and $R_-(\gamma) = R(\gamma) - R_+(\gamma)$.

\begin{remark}
    Our definition of the suture $\gamma$ is more restrictive than the original definition in \cite{gabai1983foliations}, in the sense that we exclude the possibility that $\gamma$ can contain some toroidal components, as such components cannot exist for balanced sutured manifolds. Since $A(\gamma)$ is simply a tubular neighborhood of $s(\gamma)$, on some occasions we will simply use $s(\gamma)$ in place of $\gamma$.
\end{remark}

\begin{definition}[{\cite[Definition 2.2]{juhasz2006holomorphic}}]
Suppose $(M,\gamma)$ is a pair as described above. If $M$ is connected, then we call $(M,\gamma)$ a {\bf balanced sutured manifold} if the following hold.
\begin{itemize}
	\item For every component $V\subset\partial M$, $V\cap A(\gamma)\neq \emptyset$.
	\item We have $\chi(R_+(\gamma)) = \chi(R_-(\gamma)).$
\end{itemize}
If $M$ is disconnected, then we call $(M,\gamma)$ a balanced sutured manifold if for any component $C\subset M$, $(C,C\cap\gamma)$ is a balanced sutured manifold in the above sense.
\end{definition}

\begin{definition}
   Suppose $(M,\gamma)$ is a balanced sutured manifold. We say that a co-oriented contact structure $\xi$ is compatible with $(M,\gamma)$ if $\partial M$ is convex and $\gamma$ is (isotopic to) the dividing curve on $\partial M$.  
\end{definition}

\begin{theorem}[{\cite[Theorem 0.1 and Corollary 4.3]{honda2009contact}}]
Suppose $\xi$ is a contact structure compatible with $(M,\gamma)$. Then there is a well-defined element
\[
c(\xi) \in SFH(-M,-\gamma).
\]
Furthermore, if $\xi$ is overtwisted, then $c(\xi) = 0$.
\end{theorem}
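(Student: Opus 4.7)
The plan is to follow the Honda--Kazez--Mati\'c framework of partial open book decompositions adapted to the sutured setting. The first step is to establish a Giroux-type correspondence in the sutured category: every contact structure $\xi$ compatible with $(M,\gamma)$ admits a \emph{partial open book decomposition} $(S,P,h)$, where $S$ is a compact surface with boundary, $P\subset S$ is a subsurface containing $\partial S$, and $h\co P\to S$ is an embedding fixing a collar of $\partial S\cap \partial P$ pointwise; moreover any two such decompositions are related by a finite sequence of partial positive stabilizations. This can be proved using a convex Morse function on $M$ adapted to $\xi$ and to the sutured boundary, paralleling Giroux's construction in the closed case.

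From a chosen partial open book one constructs a sutured Heegaard diagram for $-(M,\gamma)$: fix an arc basis $\{a_i\}_{i=1}^k$ on $P$ whose complement in $P$ is a disk, let $b_i$ be the image under $h$ of a small boundary-direction pushoff of $a_i$, and glue $P$ to a copy of $S\setminus P$ along the appropriate portion of $\partial S$ to obtain the Heegaard surface. The tuple $\mathbf{x}(\xi)=\{a_i\cap b_i\}$ of canonical intersection points, one on each $a_i$ inside $P$, is a cycle in the sutured Floer chain complex, since any holomorphic disk leaving it would be forced into a region forbidden by the placement of the $b_i$. Define $c(\xi)$ to be the homology class $[\mathbf{x}(\xi)]\in SFH(-M,-\gamma)$.

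For well-definedness one must check independence of (i) the arc basis $\{a_i\}$ on a fixed $(S,P,h)$, handled by the standard sutured Heegaard moves together with careful tracking of the distinguished intersection point, and (ii) partial positive stabilizations of $(S,P,h)$, which introduce a canceling pair of generators that leave $[\mathbf{x}(\xi)]$ unchanged. I expect step (ii) to be the main obstacle, since following the distinguished cycle through the change of open book requires local holomorphic analysis near the added one-handle, and one has to rule out unwanted boundary degenerations.

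Finally, for the vanishing of $c(\xi)$ when $\xi$ is overtwisted, the plan is to locate an overtwisted disk and produce a partial open book in which some arc $b_i$ is forced to meet $a_i$ in an extra point paired with $\mathbf{x}(\xi)$ by a small holomorphic bigon localized near the overtwisted disk. Using a standard neighborhood model for the overtwisted disk, this bigon contributes to the differential in such a way that $\mathbf{x}(\xi)$ becomes a boundary, yielding $c(\xi)=0$. The key technical input is showing that a suitable ``bypass'' partial open book can always be arranged in a neighborhood of an overtwisted disk without altering $[\mathbf{x}(\xi)]$ elsewhere, so that the local vanishing computation propagates to the global contact class.
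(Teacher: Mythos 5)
This statement is not proved in the paper at all: it is imported verbatim from Honda--Kazez--Mati\'c (\cite[Theorem 0.1 and Corollary 4.3]{honda2009contact}), so there is no internal proof to compare against. Measured against the actual argument in that reference, your outline of the \emph{existence and well-definedness} of $c(\xi)$ is a faithful reconstruction: partial open book $(S,P,h)$ from a contact cell decomposition, arc basis on $P$, the canonical cycle $\mathbf{x}(\xi)=\{a_i\cap b_i\}$ in the sutured Heegaard diagram for $-(M,\gamma)$, and invariance under arc slides and positive stabilization. (One caveat: the uniqueness half of the relative Giroux correspondence, which you invoke for step (ii), is a genuinely deep input whose complete proof is much harder than ``paralleling Giroux's construction''; it cannot be dispatched in a sentence.)

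The genuine gap is in the overtwisted-vanishing step. You propose to find a bigon ``localized near the overtwisted disk'' that kills $\mathbf{x}(\xi)$, and you concede that propagating this local computation to the global class is ``the key technical input'' --- but that propagation is exactly the theorem's content, and your sketch supplies no mechanism for it. To run a bigon argument you would need a \emph{global} partial open book for $(M,\gamma,\xi)$ adapted to the overtwisted disk, which is not something you can arrange ``without altering $[\mathbf{x}(\xi)]$ elsewhere'' by fiat; that is essentially the non-right-veering argument from the closed case and requires the full Giroux correspondence plus a right-veering-type criterion. The argument in \cite{honda2009contact} avoids this entirely: one takes a standard overtwisted ball $(B,\delta,\xi|_B)$ inside $M$, computes directly from an explicit partial open book that its contact class vanishes in $SFH(-B,-\delta)\cong\mathbb{F}$, and then applies the naturality of the contact class under inclusion of contact submanifolds --- the gluing map $\Phi\co SFH(-B,-\delta)\to SFH(-M,-\gamma)$ carries $c(\xi|_B)=0$ to $c(\xi)$. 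The missing idea in your proposal is precisely this gluing/inclusion map; without it (or a full substitute for the global right-veering argument) the vanishing claim does not follow.
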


\subsection{Contact structures and contact elements}
We first establish a lemma about the existence of non-vanishing contact invariant on a taut balanced sutured manifold. 
\begin{lemma}\label{lem: contact str with non-zero contact element}
	Suppose $(M,\gamma)$ is a taut balanced sutured manifold, then there exists a tight contact structure $\xi$ on $(M,\gamma)$ such that
\[
c(\xi)\neq 0 \in SFH(-M,-\gamma).
\]
\end{lemma}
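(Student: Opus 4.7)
The plan is to combine Gabai's sutured manifold hierarchy with the naturality of the contact invariant under sutured decomposition, in the spirit of the Honda--Kazez--Mati\'c framework relating convex decompositions to Juh\'asz's sutured Floer homology. First I would invoke Gabai's theorem: since $(M,\gamma)$ is taut and balanced, there is a finite taut sutured manifold hierarchy
\[
(M,\gamma) = (M_0,\gamma_0) \stackrel{S_1}{\rightsquigarrow} (M_1,\gamma_1) \stackrel{S_2}{\rightsquigarrow} \cdots \stackrel{S_n}{\rightsquigarrow} (M_n,\gamma_n),
\]
where each $(M_i,\gamma_i)$ is itself a taut balanced sutured manifold and $(M_n,\gamma_n)$ is a disjoint union of $3$-balls, each carrying a single connected suture. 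By Juh\'asz's refinement (choosing $S_i$ to be well-groomed, or more generally to satisfy his hypothesis for sutured surface decomposition), each step induces an injection onto a direct summand
\[
SFH(-M_{i+1},-\gamma_{i+1}) \hookrightarrow SFH(-M_i,-\gamma_i).
\]

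Next I would build $\xi$ by reversing the hierarchy. On each ball in $(M_n,\gamma_n)$ there is a unique tight contact structure $\xi_n$, and its contact invariant generates $SFH(-M_n,-\gamma_n)\cong \F$; in particular $c(\xi_n)\neq 0$. Inductively, given a tight contact structure $\xi_{i+1}$ on $(M_{i+1},\gamma_{i+1})$ with $c(\xi_{i+1})\neq 0$, one uses Honda's convex gluing to extend $\xi_{i+1}$ across a collar of $S_i$, realizing $S_i$ as a convex surface whose dividing set matches $\gamma_i\cap S_i$, to produce a tight contact structure $\xi_i$ on $(M_i,\gamma_i)$ that decomposes back to $\xi_{i+1}$. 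The Honda--Kazez--Mati\'c naturality of the contact class under sutured decomposition then says that the injection above sends $c(\xi_{i+1})$ to $c(\xi_i)$, so from $c(\xi_{i+1})\neq 0$ we conclude $c(\xi_i)\neq 0$. Iterating down to $i=0$ yields the desired $\xi = \xi_0$ on $(M,\gamma)$ with non-vanishing contact invariant.

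The main obstacle is to realize the hierarchy so that each decomposing surface $S_i$ is \emph{simultaneously} convex with dividing set equal to $S_i\cap\gamma_i$ \emph{and} sufficiently groomed for Juh\'asz's surface decomposition map to be injective onto a direct summand. In other words, one must combine the refinement of Gabai's hierarchy used in convex contact topology (compatibility with partial open book decompositions) with the refinement used in sutured Floer homology. Once this is arranged, the convex decomposition and Juh\'asz's surface decomposition coincide, the contact invariants are intertwined with the decomposition maps at every stage, and the inductive argument above goes through.
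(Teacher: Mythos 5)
Your proposal is correct and follows essentially the same route as the paper: Gabai's hierarchy terminating in taut sutured $3$-balls, the unique tight structure there with generating contact class, and gluing back up the hierarchy so that non-vanishing of the contact invariant is preserved at each stage. The ``main obstacle'' you flag (making each decomposing surface simultaneously convex and well-groomed) is exactly what the cited gluing theorem of Honda--Kazez--Mati\'c handles, which is the single reference the paper uses for the inductive step.
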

\begin{proof}
	By \cite[Theorem 4.2]{gabai1983foliations}, $(M,\gamma)$ admits a sutured manifold hierarchy
	\begin{equation}\label{eq: hierarchy}
		(M,\gamma) \stackrel{S_1}{\leadsto} (M_1,\gamma_1) \cdots \stackrel{S_n}{\leadsto} (M_n,\gamma_n)
	\end{equation}
	where $S_i$ has no closed components and is a well-groomed surface, in the sense of \cite[Definition 2.6]{juhasz2010polytope}, for all $i$, and $(M_n,\gamma_n)$ is the disjoint union of taut sutured $3$-balls. Let $B^3$ be a $3$-ball and $\delta$ be a suture on $\partial B^3$. It is well-known that $(B^3,\delta)$ is taut if and only if $\delta$ is connected and there is a unique tight contact structure $\xi_{st}$ on $(B^3,\delta)$ such that
	\[
	c(\xi_{st})\neq 0\in SFH(-B^3,-\delta)\cong\mathbb{F}.
	\]
	Then we can glue back the desired tight contact structure through the hierarchy by {\cite[Theorem 6.2]{honda2009contact}}.
\end{proof}

Now we are ready to prove Theorem \ref{thm: 2g-1 non-loose knot}.
\begin{proof}[Proof of Theorem \ref{thm: 2g-1 non-loose knot}]
	We adopt the same notation as in \cite[Section 3]{LW2024}. In particular, we use $\Gamma_k$ to denote the suture on $\partial (S^3\backslash N(K))$ consisting of two curves of slope $-k$. Let $S$ be a minimal-genus Seifert surface of $K$. As in the proof of \cite[Theorem 3.4]{LW2024}, there is a sutured manifold decomposition
	\[
	(S^3\backslash N(K), \Gamma_k)\stackrel{S}{\leadsto} (S^3\backslash N(S),\gamma^1)
	\]
	for any $k\geq 1$, and
	\[
	(S^3\backslash N(K), \Gamma_k)\stackrel{S}{\leadsto} (S^3\backslash N(S),\gamma^3)
	\]
	for $k=0$. Here $\gamma^1$ is a single copy of $\partial S$ and $\gamma^3$ consists of three parallel copies of $\partial S$. A further product annular decomposition yields
	\[
	(S^3\backslash N(S),\gamma^3)\stackrel{A}{\leadsto} (S^3\backslash N(S),\gamma^1).
	\]
	Now, applying Lemma \ref{lem: contact str with non-zero contact element} we obtain a contact structure $\xi_S$ on $(S^3\backslash N(S),\gamma^1)$, and the proof of \cite[Theorem 3.4]{LW2024} enables us to glue $\xi_S$ to a family of tight contact structures $\xi_k$ on $S^3\backslash N(K)$ for $k\geq 0$ such that the following hold.
	\begin{itemize}
		\item [(c1)] For any $k\geq 0$, we have 
		\[c(\xi_k)\neq 0 \in SFH(-S^3\backslash N(K),-\Gamma_k,g(K)+\frac{k-1}{2}).
		\]
		\item [(c2)] For any $k\geq 0$, $\xi_{k+1}$ is obtained from $\xi_k$ by attaching a negatively signed basic slice as in \cite[Section 2.1.3]{etnyre2017sutured}. In particular, if $\psi^{k}_{-,k+1}$ is the contact gluing map associated to the corresponding negatively signed basic slice (cf. \cite[Section 1.1]{etnyre2017sutured}), then
		\[
		\psi^k_{-,k+1}(c(\xi_k)) = c(\xi_{k+1}).
		\]
	\end{itemize}
	
The proof of \cite[Lemma 2.20]{LW2024} indicates that the map $\psi^{k}_{-, k+1}$ is an isomorphism restricted to grading $g(K)+\frac{k-1}{2}$ when $k\geq 1$. Note this map is the map used in \cite[Section 1.1]{etnyre2017sutured} to construct the direct system whose direct limit recovers $HFK^-(S^3,-K)$ (cf. \cite[Equation (2.6)]{LW2024}). Thus, the collection $\{c(\xi_k)\}_{k\geq 0}$ gives rise to a non-zero element in $HFK^-(S^3,-K,g(K))$. Here the grading of the limit element can be computed explicitly by the grading of $c(\xi_k)$ and the explicit grading shifting formula in the definition of the direct system. 

Next, for each $k\geq 0$, we can glue a standard tight contact solid torus to $(S^3\backslash N(K),\Gamma_k)$ to form a contact structure $\xi$ on $S^3$, and let $L^{-k}$ be the core of the contact solid torus, which is a Legendrian representation of $K$ with respect to the contact structure $\xi$. A priori, $\xi$ depends on the index $k$, but as in \cite[Section 1.2]{etnyre2017sutured}, the negative stabilization of $L^{-k}$ corresponding to attaching a negatively signed basic slice, which coincides with the construction of $L^{-k-1}$, and thus $\xi$ is independent of $k$. When we glue back the standard solid torus to $(S^3\backslash N(K),\Gamma_k)$, we see that ${\rm tb}(L^{-k}) = -k$. The rotation number comes from the fact that $c(\xi_k)$ is in the grading $g(K)+\frac{k-1}{2}$.
Furthermore, by \cite[Theorem 1.5]{etnyre2017sutured},
\[
\mathfrak{L}(L^{-k})\neq 0 \in HFK^-(-S^3,K,g(K)),
\]
where $\mathfrak{L}$ denotes the LOSS invariant.

In particular, if the contact structure $\xi$ we construct above is tight then $\tb(L)-\rot(L)=2g-1$ implies $\tau(K)=g(K)$, Thus, when $\tau(K)<g(K)$ the contact structure we construct has to be overtwisted and $L^{-k}$ is non-loose since $\mathfrak{L}(L^{-k})\neq 0$.

\section{proof of Theorem \ref{thm: negative surgery on knot is tight}} \label{sec: proof of negative surgery on knot is tight}
We are ready to prove Theorem \ref{thm: negative surgery on knot is tight}.
\begin{proof}
    For any positive rational number $r$, we perform contact $(-r)$-surgery (with the stabilization choices being all negative) on the admissible Legendrian representative $L^0$ we constructed in Theorem \ref{thm: 2g-1 non-loose knot}. Recall that $\tb(L^0)=0$ and $\rot(L^0)=-2g+1$, thus smoothly we are also doing $-r$ surgery. Now the Ding-Geiges-Stipsicz \cite{DGS} algorithm tells us that contact $(-r)$-surgery is equivalent to a sequence of Legendrian surgeries on push-offs of $L^0$ with some negative stabilizations. We start with the first Legendrian surgery on some negative stabilizations of $L^0$, and by Corollary \ref{thm: Legendrian surgery with non-vanishing contact invariant}, this Legendrian surgery has non-vanishing contact invariant. Moreover, since a Legendrian surgery preserves the non-vanishing of contact invariant, the result follows.
\end{proof}


\end{proof}


\bibliographystyle{amsalpha}
\bibliography{bibliography}

\end{document}